\def\R{\mathbb{R}}
\def\Z{\mathbb{Z}}
\def\N{\mathbb{N}}
\def\C{\mathbb{C}}
\def\k{\textit {\textbf k}}
\newcommand\cR{\mathcal R}
\newcommand\Ec{{E_{\rm c}}}
\newcommand\cN{\mathcal N}
\newcommand\cI{\mathcal I}
\newcommand\cG{\mathcal G}
\newcommand\dps{\displaystyle }
\newtheorem{theorem}{Theorem}[section]
\newtheorem{lemma}[theorem]{Lemma}
\newtheorem{e-proposition}[theorem]{Proposition}
\newtheorem{e-definition}[theorem]{Definition\rm}
\newtheorem{remark}{Remark} 
\def\qed{\relax
     \ifmmode
       ~\hfill\Box
     \else
        \unskip\nobreak ~\hfill$\Box$
      \fi \par}
\newtheorem{Proof}{Proof}
\newenvironment{proof}{\begin{Proof}\rm}{\qed\end{Proof}}
\begin{document}

\title{Numerical analysis of the planewave discretization of
  orbital-free and Kohn-Sham models \\
Part I: The Thomas-Fermi-von Weizs\"acker model}
\author{Eric Canc\`es\footnote{Universit\'e Paris-Est, CERMICS,
    Project-team Micmac, INRIA-Ecole des Ponts,  6 \& 8 avenue Blaise
    Pascal, 77455 Marne-la-Vall\'ee Cedex 2, France.}, Rachida
  Chakir\footnote{UPMC Univ Paris 06, UMR 7598 LJLL, Paris, F-75005 France ;
CNRS, UMR 7598 LJLL, Paris, F-75005 France} 
$\,$ and Yvon Maday$^\dag$\footnote{Division of Applied Mathematics, Brown
  University, Providence, RI, USA} } 
%
%
\maketitle

\begin{abstract}
We provide {\it a priori} error estimates for the spectral and
pseudospectral Fourier (also called planewave) discretizations of the
periodic Thomas-Fermi-von Weizs\"acker (TFW) model and of the Kohn-Sham
model, within the local density approximation (LDA). These models
allow to compute approximations of the ground state energy and density
of molecular systems in the condensed phase. The TFW model is stricly
convex with respect to the electronic density, and allows for a
comprehensive analysis (Part I). This is not the case for the Kohn-Sham LDA
model, for which the uniqueness of the ground state electronic density
is not guaranteed. Under a coercivity assumption on the second order
optimality condition, we prove in Part~II that for large enough energy
cut-offs, 
the discretized Kohn-Sham LDA problem has a minimizer in the
vicinity of any Kohn-Sham ground state, and that this minimizer is unique up to
unitary transform. We then derive optimal  {\it a priori} error estimates
for both the spectral and the pseudospectral discretization methods.
\end{abstract}

\selectlanguage{english}
\section{Introduction}
\label{sec:introduction}

Density Functional Theory (DFT) is a powerful method
for computing ground state electronic energies and
densities in quantum chemistry, materials science, molecular biology and
nanosciences. The models originating from DFT can be classified into two
categories: the orbital-free models and the Kohn-Sham models. 
The Thomas-Fermi-von Weizs\"acker (TFW) model falls into the first
category. It is not very much used in practice, but is
interesting from a mathematical viewpoint. It indeed serves as a toy
model for the analysis of the more complex electronic structure models
routinely used by Physicists and Chemists. At the other extremity of the
spectrum, the Kohn-Sham models are among the most widely used models in
Physics and Chemistry, but are much more difficult to deal with. We focus
here on the numerical analysis of the TFW model on the one hand, and of
the Kohn-Sham model, within the local density approximation (LDA), on
the other hand. More precisely, we are interested in the
pseudospectral Fourier, more commonly called planewave, discretization
of the periodic version of these two models. In this context, the
simulation domain, sometimes refered to as the supercell, is the unit cell
of some periodic lattice of $\R^3$. In the TFW model, periodic boundary
conditions 
(PBC) are imposed to the density; in the Kohn-Sham framework, they are
imposed to the Kohn-Sham orbitals (Born-von Karman PBC). Imposing
PBC at the boundary of the simulation cell is a standard method to
compute condensed phase properties with a limited number of atoms in the
simulation cell, hence at a moderate computational cost.
 
This article is organized as follows. In Section~\ref{sec:Fourier}, we
briefly introduce the functional setting used in the formulation and
the analysis of the planewave discretization of orbital-free and
Kohn-Sham models. In Section~\ref{sec:TFW}, we provide {\it a priori}
error estimates for the planewave discretization of the TFW model. Our
estimates refine and complement some of the results given in
\cite{Zhou_TFW}. In Part II, we deal with the Kohn-Sham LDA
model.

\section{Basic Fourier analysis for planewave discretization methods}
\label{sec:Fourier}

Throughout this article, we denote by $\Gamma$ the simulation cell, by
$\cR$ the periodic lattice, and by
$\cR^\ast$ the dual lattice. For
simplicity, we assume that
$\Gamma=[0,L)^3$  ($L > 0$), but our arguments can be straightforwardly
extended to rectangular simulation cells ($\Gamma=[0,L_x) \times
[0,L_y) \times [0,L_z)$). For  $\Gamma=[0,L)^3$, $\cR$ is the cubic
lattice $L\Z^3$, and
$\cR^\ast = \frac{2\pi}L \Z^3$. For $k \in
\cR^\ast$, we denote by $e_k(x)=|\Gamma|^{-1/2} \, e^{ik\cdot x}$ the
planewave with wavevector $k$. The family $(e_k)_{k \in \cR^\ast}$
forms an orthonormal basis of 
$$
L^2_\#(\Gamma,\C):=\left\{ u \in L^2_{\rm loc}(\R^3,\C) \; | \; u \mbox{
    $\cR$-periodic} \right\},
$$
and for all $u \in L^2_\#(\Gamma,\C)$,
$$
u(x) = \sum_{k \in \cR^\ast} \widehat u_k \, e_k(x) \qquad \mbox{with} \qquad
\widehat u_k=(e_k,u)_{L^2_\#} = |\Gamma|^{-1/2} \int_\Gamma u(x)
e^{-ik\cdot x} \, dx.
$$
In our analysis, we will only consider real valued functions. We
therefore introduce the Sobolev spaces of real valued functions 
$$
H^s_\#(\Gamma) :=\left\{ u(x) =  \sum_{k \in \cR^\ast} \widehat u_k \,
  e_k(x) \; | \; \sum_{k \in \cR^\ast} (1+|k|^2)^s |\widehat
  u_k|^2 < \infty \mbox{ and } \forall k,  \; c_{-k}=c_k^\ast  \right\},
$$
$s \in \R$, endowed with the inner products
$$
(u,v)_{H^s_\#} =  \sum_{k \in \cR^\ast} (1+|k|^2)^s \,
\widehat u_k^\ast \, \widehat v_k.
$$ 
For $N_c \in \N$, we denote by
\begin{equation} 
\label{eq_1}
V_{N_c} = \left\{ \sum_{k \in \cR^\ast \, | \, |k| \le \frac{2\pi}L N_c} c_k
  e_k \; | \; \forall k, \; c_{-k}=c_k^\ast \right\}
\end{equation} 
(the constraints $c_{-k}=c_k^\ast$ imply that the functions of $V_{N_c}$
are real valued). For all $s \in \R$, and each $v
\in H^s_\#(\Gamma)$, the best approximation of $v$ in $V_{N_c}$ for {\it any}
$H^r_\#$-norm, $r \le s$, is
$$
\Pi_{N_c} v = \sum_{\k\in \cR^\ast \, | \, |\k| \le \frac{2\pi}L N_c}
\widehat v_k e_k. 
$$
The more regular $v$ (the regularity being measured in terms of the
Sobolev norms $H^r$), the faster the convergence of this truncated
series to $v$: for all real numbers $r$ and $s$ with $r \le s$, we
have for each $v\in H^s_\#(\Gamma)$,
\begin{eqnarray}
\|v - \Pi_{N_c}v\|_{H^r_\#} = \min_{v_{N_c}
  \in V_{N_c}} \|v-v_{N_c}\|_{H^r_\#} & \le & 
\left( \frac{L}{2\pi} \right)^{s-r} \, N_c^{-(s-r)} \|v-
\Pi_{N_c}v\|_{H^s_\#} 
\nonumber \\
&\le & 
\left( \frac{L}{2\pi} \right)^{s-r} \, N_c^{-(s-r)} \|v\|_{H^s_\#}.
\label{eq:app-Fourier}
\end{eqnarray}
For $N_g \in \N \setminus \left\{0\right\}$, we denote by 
$\widehat{\phi}^{{\rm FFT},N_g}$ the discrete Fourier transform 
on the carterisan grid $\cG_{N_g}:=\frac{L}{N_g} \, \Z^3$ of the
function $\phi \in C^0_\#(\Gamma)$. Recall that
if $\phi = \sum_{k \in \cR^\ast} \widehat \phi_g \, e_g \in C^0_\#(\Gamma)$,
the discrete Fourier transform of $\phi$ is the $N_g\cR^\ast$-periodic
sequence $\widehat{\phi}^{{\rm FFT},N_g}=(\widehat{\phi}^{{\rm FFT},N_g}_{k})_{k
  \in \cR^\ast}$ where  
$$
\widehat{\phi}^{{\rm FFT},N_g}_{k}
= \frac{1}{N_g^3} \sum_{x \in \cG_{N_g} \cap \Gamma} \phi(x) e^{-ik
  \cdot x} = |\Gamma|^{-1/2} \sum_{K \in \cR^\ast} \widehat \phi_{k+N_gK}.
$$
We now introduce the subspaces 
$$
W_{N_g}^{\rm 1D}  \; = \; \left| \begin{array}{lll}
\dps \mbox{Span} \left\{ e^{ily} \; | \; l \in \frac{2\pi}L \Z, \; |l| \le 
\frac{2\pi}L \left( \frac{N_g-1}2 \right) \right\}
 & \quad  (N_g \mbox{ odd}), \\
 \dps \mbox{Span} \left\{ e^{ily} \; | \; l \in \frac{2\pi}L \Z, \; |l| \le 
\frac{2\pi}L \left( \frac{N_g}2 \right) \right\} \oplus \C
(e^{i \pi N_gy/L}+e^{-i \pi N_gy/L}) 
& \quad  (N_g \mbox{ even}),
\end{array} \right.
$$
($W_{N_g}^{\rm 1D} \in C^\infty_\#([0,L))$ and $\mbox{dim}(W_{N_g}^{\rm
  1D}) = N_g$), and 
$W_{N_g}^{\rm 3D} = W_{N_g}^{\rm 1D} \otimes W_{N_g}^{\rm 1D} \otimes
W_{N_g}^{\rm 1D}$.
Note that $W_{N_g}^{\rm 3D}$ is a subspace of $H^s_\#(\Gamma)$ of dimension
$N_g^3$, for all $s \in \R$, and that if $N_g$ is odd,
$$
W_{N_g}^{\rm 3D} =  \mbox{Span} \left\{ e_k \; | \; k \in \cR^\ast =
  \frac{2\pi}L \Z^3, \;
  |k|_\infty \le  \frac{2\pi}L \left( \frac{N_g-1}2 \right) \right\}
\qquad  \qquad  (N_g \mbox{ odd}).
$$
It is then possible to define
the interpolation projector $\cI_{N_g}$ from $C^0_\#(\Gamma)$ onto
$W_{N_g}^{\rm 3D}$ by $[\cI_{N_g}(\phi)](x) = \phi(x)$ for all 
$x \in \cG_{N_g}$. It holds
\begin{equation} \label{eq:integration_formula_INg}
\forall \phi \in C^0_\#(\Gamma), \quad 
\int_\Gamma \cI_{N_g}(\phi) =  \sum_{x \in \cG_{N_g} \cap \Gamma} \left(
  \frac{L}{N_g} \right)^3 \phi(x).
\end{equation}
The coefficients of the expansion of
$\cI_{N_g}(\phi)$ in the canonical 
basis of $W_{N_g}^{\rm 3D}$ is given by the discrete Fourier transform of
$\phi$. In particular, when $N_g$ is odd, we have the simple relation
$$
\cI_{N_g}(\phi)  =  
|\Gamma|^{1/2} \dps \sum_{k \in \cR^\ast \, | \, 
|k|_\infty \le \frac{2\pi}L \left( \frac{N_g-1}2 \right)} \widehat{\phi}^{{\rm
      FFT},N_g}_{k} \, e_k  \qquad \qquad  (N_g \mbox{ odd}).
$$
It is easy to check that if $\phi$ is real-valued, then so is
$\cI_{N_g}(\phi)$.

We will assume in the sequel that $N_g \ge 4N_c+1$. We will then have for
all $v_{4N_c} \in V_{4N_c}$, 
\begin{equation} \label{eq:exact_integration}
\int_\Gamma v_{4N_c} = \sum_{x \in \cG_{N_g} \cap \Gamma}
\left( \frac{L}{N_g} \right)^3 v_{4N_c}(x) = \int_\Gamma \cI_{N_g} (v_{4N_c}).
\end{equation}

The following lemma gathers some technical results which will be useful
for the numerical analysis of the planewave discretization of
orbital-free and Kohn-Sham models.

\begin{lemma} \label{lem:technical}
Let $N_c \in \N^\ast$ and $N_g \in \N^\ast$ such that $N_g \ge 4N_c+1$.
  \begin{enumerate}
  \item Let $V$ be a real-valued function of $C^0_\#(\Gamma)$ and $v_{N_c}$
    and $w_{N_c}$ be two functions of $V_{N_c}$. Then 
\begin{eqnarray}
\int_\Gamma \cI_{N_g}(V v_{N_c}w_{N_c}) &=& 
\int_\Gamma \cI_{N_g}(V) v_{N_c} w_{N_c}  \label{eq:ineg_INg_2} \\
\left| \int_\Gamma \cI_{N_g}(V |v_{N_c}|^2) \right| &\le & \|V\|_{L^\infty} 
\|v_{N_c}\|_{L^2_\#}^2  \label{eq:bound_INg1}
\end{eqnarray}

  \item Let $s > 3/2$, $0 \le r \le s$, and $V$ a function of
    $H^s_\#(\Gamma)$. Then,
\begin{eqnarray}
\left\| (1-\cI_{N_g})(V) \right\|_{H^r_\#} & \le &
C_{r,s} N_g^{r-s} \|V\|_{H^s_\#}  \label{eq:ineg_INg_0} \\
\left\| \Pi_{2N_c}(\cI_{N_g}(V)) \right\|_{L^2_\#} & \le &
\left( \int_\Gamma \cI_{N_g}(|V|^2) \right)^{1/2}  \label{eq:ineg_INg_4} \\
\left\| \Pi_{2N_c}(\cI_{N_g}(V)) \right\|_{H^s_\#} & \le &
(1+C_{s,s}) \|V\|_{H^s_\#}  \label{eq:ineg_INg_5}
\end{eqnarray}
for constants $C_{r,s}$ independent of $V$. Besides if there exists $m
> 3$ and $C \in \R_+$ such that $|\widehat V_k| \le C |k|^{-m}$, then
there exists a constant $C_V$ independent of $N_c$ and $N_g$ such that
\begin{eqnarray}
\left\| \Pi_{2N_c}(1-\cI_{N_g})(V) \right\|_{H^r} & \le & 
C_V N_c^{r+3/2} N_g^{-m}  \label{eq:ineg_INg_6} 
\end{eqnarray}
 \item Let $\phi$ is be a Borel function from
$\R_+$ to $\R$ such that there exists $C_\phi \in \R_+$ for which
$|\phi(t)| \le C_\phi (1+t^2)$ for all $t \in \R_+$. Then, for all $v_{N_c}
\in V_{N_c}$,
\begin{eqnarray}
\left| \int_\Gamma \cI_{N_g}(\phi(|v_{N_c}|^2)) \right| &\le & C_\phi \left(
  |\Gamma|+\|v_{N_c}\|_{L^4_\#}^4 \right). \label{eq:bound_INg2} 
\end{eqnarray}
  \end{enumerate}
\end{lemma}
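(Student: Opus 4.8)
The plan is to reduce all of Lemma~\ref{lem:technical} to two elementary facts about the grid $\cG_{N_g}$. The first is the exact integration identity~\eqref{eq:integration_formula_INg}, which states that integrating a trigonometric interpolant over $\Gamma$ amounts to applying the $N_g^3$-point trapezoidal rule $\sum_{x\in\cG_{N_g}\cap\Gamma}(L/N_g)^3\,\phi(x)$. The second is that this quadrature rule is \emph{exact} on $V_{4N_c}$ whenever $N_g\ge 4N_c+1$ --- this is~\eqref{eq:exact_integration} --- and, more generally, exact on every trigonometric polynomial whose modes $k$ satisfy $|k|_\infty<\frac{2\pi}{L}N_g$ (a direct consequence of the discrete Fourier transform formula recalled in the excerpt). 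The recurring bookkeeping observation is that products of functions of $V_{N_c}$ have controlled bandwidth: if $v_{N_c},w_{N_c}\in V_{N_c}$ then $v_{N_c}w_{N_c}\in V_{2N_c}$, $|v_{N_c}|^{2}\in V_{2N_c}$ and $|v_{N_c}|^{4}\in V_{4N_c}$, while $\cI_{N_g}(V)$ only involves modes $k$ with $|k|_\infty\le\frac{2\pi}{L}\frac{N_g}{2}$; hence under $N_g\ge 4N_c+1$ a product such as $\cI_{N_g}(V)\,v_{N_c}w_{N_c}$ still has all its modes in the range where the quadrature is exact.

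Part~1 is then immediate. By~\eqref{eq:integration_formula_INg} applied to $Vv_{N_c}w_{N_c}\in C^0_\#(\Gamma)$, the left-hand side of~\eqref{eq:ineg_INg_2} equals $\sum_{x\in\cG_{N_g}\cap\Gamma}(L/N_g)^3 V(x)v_{N_c}(x)w_{N_c}(x)$; the right-hand side equals the same sum, because $\cI_{N_g}(V)\,v_{N_c}w_{N_c}$ is a trigonometric polynomial of admissible bandwidth, so its integral is computed exactly by the trapezoidal rule, and $\cI_{N_g}(V)$ agrees with $V$ on $\cG_{N_g}$. Inequality~\eqref{eq:bound_INg1} follows by bounding $|V(x)|\le\|V\|_{L^\infty}$ on the grid and then using exactness of the quadrature on $|v_{N_c}|^{2}\in V_{2N_c}\subset V_{4N_c}$ to recognise $\sum_x(L/N_g)^3|v_{N_c}(x)|^2=\|v_{N_c}\|_{L^2_\#}^2$. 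Part~3 is the same argument: by~\eqref{eq:integration_formula_INg} the quantity in~\eqref{eq:bound_INg2} is $\sum_x(L/N_g)^3\phi(|v_{N_c}(x)|^2)$, one bounds the summand by $C_\phi(1+|v_{N_c}(x)|^4)$, observes $\sum_x(L/N_g)^3=|\Gamma|$, and uses exactness on $|v_{N_c}|^{4}\in V_{4N_c}$ to identify $\sum_x(L/N_g)^3|v_{N_c}(x)|^4=\|v_{N_c}\|_{L^4_\#}^4$.

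For Part~2 the workhorse is the aliasing formula recorded in the excerpt: for $|k|_\infty$ below the cutoff of $W_{N_g}^{\rm 3D}$, the $k$-th Fourier coefficient of $\cI_{N_g}(V)$ is $\sum_{K\in\cR^\ast}\widehat V_{k+N_gK}$, so $(1-\cI_{N_g})(V)$ has Fourier coefficient $\widehat V_k$ for $|k|_\infty$ above the cutoff and $-\sum_{K\neq 0}\widehat V_{k+N_gK}$ below it. To prove~\eqref{eq:ineg_INg_0} one splits $\|(1-\cI_{N_g})V\|_{H^r_\#}^2$ into these two contributions: the high-frequency part is $\le C N_g^{2(r-s)}\|V\|_{H^s_\#}^2$ since there $(1+|k|^2)^{r-s}\le C N_g^{2(r-s)}$; for the aliased part one applies the Cauchy--Schwarz inequality in the index $K$, uses $|k+N_gK|\gtrsim N_g|K|_\infty$, valid for $K\neq 0$ and $|k|_\infty\le\frac{2\pi}{L}\frac{N_g}{2}$, bounds $\sum_{K\neq 0}|K|_\infty^{-2s}<\infty$ (this is exactly where $s>3/2$ is needed), and uses $(1+|k|^2)^r\le CN_g^{2r}$ on the remaining finite sum, so that the double sum over the pairwise distinct indices $k+N_gK$ is again controlled by $N_g^{2(r-s)}\|V\|_{H^s_\#}^2$. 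Estimate~\eqref{eq:ineg_INg_5} then follows from $\Pi_{2N_c}\cI_{N_g}(V)=\Pi_{2N_c}V-\Pi_{2N_c}(1-\cI_{N_g})(V)$, the fact that $\Pi_{2N_c}$ does not increase any $H^s_\#$-norm, and~\eqref{eq:ineg_INg_0} with $r=s$. For~\eqref{eq:ineg_INg_4} I would use the duality characterisation $\|\Pi_{2N_c}g\|_{L^2_\#}=\sup\bigl\{(w,g)_{L^2_\#}:w\in V_{2N_c},\ \|w\|_{L^2_\#}=1\bigr\}$ with $g=\cI_{N_g}(V)$; since $w\,\cI_{N_g}(V)$ has admissible bandwidth, $(w,\cI_{N_g}(V))_{L^2_\#}=\sum_x(L/N_g)^3 w(x)V(x)$, and the discrete Cauchy--Schwarz inequality together with exactness on $w^2\in V_{4N_c}$ (giving $\sum_x(L/N_g)^3w(x)^2=\|w\|_{L^2_\#}^2=1$) produces the bound $\bigl(\int_\Gamma\cI_{N_g}(|V|^2)\bigr)^{1/2}$. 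Finally~\eqref{eq:ineg_INg_6}: if $|\widehat V_k|\le C|k|^{-m}$ with $m>3$, then for $|k|_\infty\le\frac{2\pi}{L}2N_c$ --- which lies below the cutoff because $N_g\ge 4N_c+1$ --- the aliased coefficient obeys $\bigl|\sum_{K\neq 0}\widehat V_{k+N_gK}\bigr|\le C'N_g^{-m}\sum_{K\neq 0}|K|_\infty^{-m}\le C_V N_g^{-m}$ uniformly in $k$; summing $(1+|k|^2)^r\le CN_c^{2r}$ over the $O(N_c^3)$ such modes gives $\|\Pi_{2N_c}(1-\cI_{N_g})V\|_{H^r}^2\le C_V N_c^{2r+3}N_g^{-2m}$, which is the claim.

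The only genuinely delicate point is the aliased-part estimate in~\eqref{eq:ineg_INg_0}: the Cauchy--Schwarz split in $K$ must be organised so that, after summing over the modes $k$ below the cutoff, the shifted indices $k+N_gK$ run over pairwise distinct elements of $\cR^\ast$ and the resulting double sum is bounded by $\|V\|_{H^s_\#}^2$; and one must keep track of the odd/even dichotomy in the definition of $W_{N_g}^{\rm 1D}$ (the extra Nyquist pair when $N_g$ is even), which however touches only finitely many modes near the cutoff and changes no exponent. Everything else is routine manipulation of Fourier coefficients together with the exactness threshold $N_g\ge 4N_c+1$.
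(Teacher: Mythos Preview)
Your proposal is correct and follows essentially the same route as the paper: quadrature exactness~\eqref{eq:exact_integration} handles Parts~1 and~3 and the bandwidth bookkeeping, while the aliasing formula for $\widehat{\phi}^{{\rm FFT},N_g}_k$ drives Part~2. The only noteworthy differences are that the paper simply cites \cite{CHQZ} for~\eqref{eq:ineg_INg_0} rather than sketching the Cauchy--Schwarz-in-$K$ argument you outline, and that for~\eqref{eq:ineg_INg_4} the paper bounds $\|\Pi_{2N_c}\cI_{N_g}(V)\|_{L^2_\#}$ directly by $\|\cI_{N_g}(V)\|_{L^2_\#}$ and then invokes the discrete Parseval identity on $W_{N_g}^{\rm 3D}$, whereas your duality-plus-discrete-Cauchy--Schwarz argument is an equivalent variant that only needs exactness on $V_{4N_c}$.
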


\begin{proof} 
For $z_{2N_c} \in V_{2N_c}$, it holds
\begin{eqnarray}
\int_\Gamma \cI_{N_g}(V z_{2N_c}) &=& \sum_{x \in \cG_{N_g} \cap \Gamma}
V(x) z_{2N_c}(x)  \nonumber \\
& = & \sum_{x \in \cG_{N_g} \cap \Gamma}
(\cI_{N_g}(V))(x) z_{2N_c}(x)  \nonumber \\
& = & \int_\Gamma \cI_{N_g}(V) \,
z_{2N_c} \label{eq:equality_INg}
\end{eqnarray}
since $\cI_{N_g}(V) z_{2N_c} \in V_{4N_c}$. The function
$v_{N_c}w_{N_c}$ being in $V_{2N_c}$, (\ref{eq:ineg_INg_2}) 
is proved. Moreover, as $|v_{N_c}|^2 \in V_{4N_c}$, it
follows from (\ref{eq:exact_integration}) that 
\begin{eqnarray*}
\left| \int_\Gamma \cI_{N_g}(V |v_{N_c}|^2 ) \right| & = &
\left|  \sum_{x \in \cG_{N_g} \cap \Gamma} \left( \frac{L}{N_g}
  \right)^3  V(x)  |v_{N_c}(x)|^2 \right| \\
& \le &  \|V\|_{L^\infty} \left|  \sum_{x \in \cG_{N_g}
    \cap \Gamma} \left( \frac{L}{N_g} 
  \right)^3    |v_{N_c}(x)|^2 \right| \\
& = &  \|V\|_{L^\infty} \int_\Gamma |v_{N_c}|^2.
\end{eqnarray*}
Hence (\ref{eq:bound_INg1}). 
The estimate (\ref{eq:ineg_INg_0}) is
proved in \cite{CHQZ}. To prove (\ref{eq:ineg_INg_4}), we
notice that
\begin{eqnarray*}
\|\Pi_{2N_c}(I_{N_g}(V))\|_{L^2_\#}^2 & \le  & \|I_{N_g}(V)\|_{L^2_\#}^2
\\
& = & \int_\Gamma (I_{N_g}(V))^\ast  (I_{N_g}(V)) \\
& = &  \sum_{x \in \cG_{N_g} \cap \Gamma} (I_{N_g}(V))(x)^\ast
(I_{N_g}(V))(x) \\
& = &  \sum_{x \in \cG_{N_g} \cap \Gamma} |V(x)|^2 \\
& = &  \int_\Gamma I_{N_g}(|V|^2).
\end{eqnarray*}
The bound (\ref{eq:ineg_INg_5}) is a straightforward
consequence of (\ref{eq:ineg_INg_0}):
\begin{eqnarray*}
\|\Pi_{2N_c}(I_{N_g}(V))\|_{H^s_\#} & \le & \|I_{N_g}(V)\|_{H^s_\#} \le 
\|V\|_{H^s_\#} + \|(1-I_{N_g})(V)\|_{H^s_\#}  \le 
(1+C_{s,s}) \|V\|_{H^s_\#}.
\end{eqnarray*}
Now, we notice that
\begin{eqnarray}
\Pi_{2N_c} (\cI_{N_g}(V)) &=& |\Gamma|^{1/2} 
\sum_{k \in \cR^\ast \, | \, |k| \le \frac{4\pi}{L}N_c} \widehat
V_k^{{\rm FFT},N_g} e_k 
\nonumber \\
& = &  \sum_{k \in \cR^\ast \, | \, |k| \le \frac{4\pi}{L}N_c} \left(
  \sum_{K \in \cR^\ast}  
\widehat V_{k+N_g K} \right)  e_k \label{eq:Pi2NINg}.
\end{eqnarray}
From (\ref{eq:Pi2NINg}), we obtain
\begin{eqnarray*}
\left\| \Pi_{2N_c}(1-\cI_{N_g})(V) \right\|_{H^s}^2 & = &
 \sum_{k \in \cR^\ast \, | \, |k| \le \frac{4\pi}{L}N_c} (1+|k|^2)^s \left| 
\sum_{K \in \cR^\ast \setminus \left\{0\right\}} 
\widehat V_{k+N_g K} \right|^2 \\ & = & \left( \sum_{k \in \cR^\ast \, |
  \, |k| \le \frac{4\pi}{L}N_c} (1+ |k|^2)^s \right) 
 \max_{k \in \cR^\ast \, | \, |k| \le \frac{4\pi}{L}N_c}  \left| 
\sum_{K \in \cR^\ast \setminus \left\{0\right\}} 
\widehat V_{k+N_g K} \right|^2 .
\end{eqnarray*}
On the one hand,
$$
\sum_{k \in \cR^\ast \, | \, 
|k| \le \frac{4\pi}{L}N_c} (1+ |k|^2)^s \mathop{\sim}_{N_c \to
  \infty} \frac{32\pi}{2s+3} \left( \frac{4\pi}L \right)^{2s} \, N_c^{2s+3},  
$$
and on the other hand, we have for each $k \in \cR^\ast$ such that $|k|
\le \frac{4\pi}{L}N_c$, 
\begin{eqnarray*}
 \left| \sum_{K \in \cR^\ast \setminus \left\{0\right\}} 
\widehat V_{k+N_g K} \right| & \le & C 
 \sum_{K \in \cR^\ast \setminus \left\{0\right\}} \frac{1}{|k+N_gK|^m}
 \\
& \le & C_0 \left( \frac{L}{2\pi} \right)^{m} N_g^{-m} 
\end{eqnarray*}
where
$$
C_0 = \max_{y \in \R^3 \, | \, |y| \le 1/2} \sum_{K \in \Z^3 \setminus
  \left\{0\right\}}  \frac{1}{|y-K|^m}.
$$
The estimate (\ref{eq:ineg_INg_6}) then easily follows. Let us finally prove
(\ref{eq:bound_INg2}). Using  (\ref{eq:integration_formula_INg}) and
(\ref{eq:exact_integration}), we have
\begin{eqnarray*}
\left| \int_\Gamma \cI_{N_g}(\phi(|v_N|^2)) \right| &=&
\left| \sum_{x \in \frac{L}{N_g} \Z^3 \cap \Gamma} \left( \frac{L}{N_g}
  \right)^3  \phi(|v_N(x)|^2) \right| \\
& \le & C_\phi
\left| \sum_{x \in \frac{L}{N_g} \Z^3 \cap \Gamma} \left( \frac{L}{N_g}
  \right)^3  (1+|v_N(x)|^4) \right| \\
& = &  C_\phi \int_\Gamma (1+|v_N|^4)  = 
C_\phi \left( |\Gamma|+\|v_N\|_{L^4_\#}^4 \right).
\end{eqnarray*}
This completes the proof of Lemma~\ref{lem:technical}.
\end{proof}

\section{Thomas-Fermi-von-Weizs\"acker model}
\label{sec:TFW}

In the TFW model, as well as in any orbital-free model, the ground state
electronic density of the system is obtained by minimizing an explicit
functional of the density. Denoting by $\cN$ the number of electrons in
the simulation cell and by
$$
{\mathfrak R}_\cN = \left\{ \rho \ge 0 \; | \; \sqrt\rho \in
  H^1_\#(\Gamma), \; \int_\Gamma \rho = \cN \right\}
$$
the set of admissible densities, the TFW problem reads
\begin{equation} 
I^{\rm TFW}  =  \inf \left\{ {\mathcal E}^{\rm TFW}(\rho), \; \rho \in
  {\mathfrak R}_\cN \right\} \label{eq:minTFWrho}, 
\end{equation}
where 
$$\dps
{\mathcal E}^{\rm TFW}(\rho)  =  \frac{C_{\rm W}}2 \int_\Gamma |\nabla
\sqrt \rho|^2 +  
C_{\rm TF} \int_\Gamma \rho^{5/3} + \int_\Gamma \rho V^{\rm ion} + \frac
12 D_\Gamma(\rho,\rho).
$$

\noindent 
$C_{\rm W}$ is a positive real number ($C_{\rm W}=1$, $1/5$ or $1/9$
depending on the context \cite{DreizlerGross}), and $C_{\rm TF}$ is 
the Thomas-Fermi constant: $C_{\rm TF}=\frac{10}3(3\pi^2)^{2/3}$.
The last term of the TFW energy models the periodic Coulomb energy: for
$\rho$ and $\rho'$ in $H^{-1}_\#(\Gamma)$,
$$
D_\Gamma(\rho,\rho'):= 4 \pi \sum_{k \in \cR^\ast \setminus
  \left\{0\right\}} |k|^{-2} \widehat \rho_k^\ast \, 
\widehat \rho'_k.
$$
We finally make the assumption that $V^{\rm ion}$ is a periodic 
potential such that 
\begin{equation} \label{eq:hypothesis}
\exists m > 3, \; C \ge 0 \mbox{ s.t. }  \forall k \in \cR^\ast, \;
|\widehat V^{\rm ion}_k| \le C |k|^{-m}.
\end{equation}
Note that this implies that $V^{\rm ion}$ is in
$H^{m-3/2-\epsilon}(\Gamma)$ for all $\epsilon > 0$. 
It is convenient to reformulate the TFW model in terms of
$v=\sqrt{\rho}$. It can be seen that
\begin{equation} 
I^{\rm TFW}  =  \inf \left\{ E^{\rm TFW}(v), \; v \in H^1_\#(\Gamma),
  \; \int_\Gamma |v|^2 = \cN \right\} \label{eq:minTFWu} 
\end{equation}
where 
$$
E^{\rm TFW}(v)  =  \frac{C_{\rm W}}2 \int_\Gamma |\nabla v|^2 + 
C_{\rm TF} \int_\Gamma |v|^{10/3} + \int_\Gamma  V^{\rm ion} |v|^2 + \frac
12 D_\Gamma(|v|^2,|v|^2).
$$

It is well known \cite{Lieb} that (\ref{eq:minTFWrho}) has a unique
minimizer $\rho^0$, and that the minimizers of 
  (\ref{eq:minTFWu}) are $u$ and $-u$, where
  $u=\sqrt{\rho^0}$. Besides, the function $u$ is in $
  H^{m+1/2-\epsilon}_\#(\Gamma)$ for any $\epsilon > 0$ (and therefore in
  $C^{2}_\#(\Gamma)$ since $m+1/2-\epsilon > 7/2$ for $\epsilon$ small
  enough), is positive everywhere in $\Gamma$ and
  satisfies the Euler equation 
$$
- \frac{C_{\rm W}}2 \Delta u + \left(\frac 53 C_{\rm TF} u^{4/3} +
V^{\rm ion}  + V_{u^2}^{\rm Coulomb} \right) u = \lambda u 
$$
for some $\lambda \in \R$, where 
$$
V_{\rho^0}^{\rm Coulomb}(x) = 4\pi
\sum_{k \in \cR^\ast \setminus \left\{0\right\}} |k|^{-2}
\widehat{\rho}_k e_k(x)
$$
is the periodic Coulomb potential generated by the periodic charge
distribution $\rho$. Recall that $V^{\rm Coulomb}_\rho$ can also be
defined as the unique solution in $H^1_\#(\Gamma)$ to 
$$
\left\{ \begin{array}{l}
\dps - \Delta V_\rho^{\rm Coulomb} = 4\pi \left(
  \rho-|\Gamma|^{-1}\int_\Gamma\rho \right) \\
\dps \int_\Gamma\rho V_\rho^{\rm Coulomb} = 0.
\end{array} \right.
$$
The planewave discretization of the TFW model is obtained by choosing 
\begin{enumerate}
\item an energy cut-off $\Ec > 0$ or, equivalently, a finite
  dimensional Fourier space $V_{N_c}$, the integer $N_c$ being related
  to $\Ec$ through the relation $N_c :=
[\sqrt{2\Ec} \, L/2\pi]$;
\item a cartesian grid $\cG_{N_g}$ with step size
$L/N_g$ where $N_g \in \N^\ast$ is such
that $N_g \ge 4N_c+1$, 
\end{enumerate}
and by considering the finite dimensional minimization problem 
\begin{equation} 
I^{\rm TFW}_{N_c,N_g}  =  \inf \left\{ E^{\rm TFW}_{N_g}(v_{N_c}),
  \; v_{N_c} \in V_{N_c},
  \; \int_\Gamma |v_{N_c}|^2 = \cN \right\}, \label{eq:minTFWuN} 
\end{equation}
where
\begin{eqnarray*} 
E^{\rm TFW}_{N_g}(v_{N_c}) & = & \frac{C_{\rm W}}2 \int_\Gamma
|\nabla v_{N_c}|^2 +  
C_{\rm TF} \int_\Gamma \cI_{N_g}(|v_{N_c}|^{10/3}) + \int_\Gamma
\cI_{N_g}(V^{\rm ion}) |v_{N_c}|^2 \\ && + \frac 12
D_\Gamma(|v_{N_c}|^2,|v_{N_c}|^2),
\end{eqnarray*} 
$\cI_{N_g}$ denoting the interpolation operator introduced in the 
previous section. The Euler equation associated with
(\ref{eq:minTFWuN}) can be written as a nonlinear eigenvalue problem
$$
\forall v_{N_c} \in V_{N_c}, \quad \langle (\widetilde
H_{|u_{N_c,N_g}|^2}^{N_g} u_{N_c,N_g} - \lambda_{N_c,N_g})
u_{N_c,N_g},v_{N_c} \rangle_{H^{-1}_\#,H^1_\#} = 0
$$
where we have denoted by
$$
\widetilde H_{\rho}^{N_g} =  - \frac{C_{\rm W}}2 \Delta +
\cI_{N_g}\left(\frac 53 C_{\rm TF} \rho^{2/3} + V^{\rm ion}\right) +
V_{\rho}^{\rm Coulomb}    
$$
the pseudo-spectral TFW Hamiltonian associated with the density $\rho$,
and by $\lambda_{N_c,N_g}$ the Lagrange multiplier of the constraint
$\int_\Gamma |v_{N_c}|^2 = \cN$.
We therefore have
$$
- \frac{C_{\rm W}}2 \Delta u_{N_c,N_g}+ \Pi_{N_c} \left[
\left(\cI_{N_g}\left( \frac 53 C_{\rm TF}|u_{N_c,N_g}|^{4/3} + V^{\rm ion}\right) + V_{|u_{N_c,N_g}|^2}^{\rm Coulomb}\right)u_{N_c,N_g}
\right]  = \lambda_{N_c,N_g} u_{N_c,N_g}.
$$
Under the condition that $N_g \ge 4N_c+1$, we have for all $\phi \in C^0_\#(\Gamma)$,  
$$
\forall (k,l) \in \cR^\ast \times \cR^\ast \mbox{ s.t. } |k|
,|l| \le \frac{2\pi}L N_c, \quad \int_\Gamma \cI_{N_g}(\phi) \, 
e_k^\ast \, e_l  = \widehat{\phi}^{{\rm FFT}}_{k-l},
$$
so that,  $\widetilde H_{u_{N_c,N_g}}$ is defined on $V_{N_c}$  by
the Fourier matrix 
\begin{eqnarray*}
[\widehat H_{|u_{N_c,N_g}|^2}^{N_g}]_{kl} &=& \frac{C_{\rm W}}2 |k|^2 \delta_{kl} + 
\frac 53 C_{\rm TF} \widehat{(|u_{N_c,N_g}|^{4/3})}_{k-l}^{{\rm FFT},N_g}
+ \widehat{(V^{\rm ion})}_{k-l}^{{\rm FFT},N_g} \\ &&  
+4\pi \frac{\widehat{(|u_{N_c,N_g}|^2)}_{k-l}^{{\rm FFT},N_g}}{|k-l|^2}
\left( 1 - \delta_{kl} \right),
\end{eqnarray*}
where, by convention, the last term of the right hand side is equal to
zero for $k=l$.

\medskip

\noindent
We also introduce the variational approximation of (\ref{eq:minTFWu}) 
\begin{equation}
I^{\rm TFW}_{N_c}  =  \inf \left\{ E^{\rm TFW}(v_{N_c}),
  \; v_{N_c} \in V_{N_c},
  \; \int_\Gamma |v_{N_c}|^2 = \cN \right\}. \label{eq:minTFWuV} 
\end{equation}
Any minimizer $u_{N_c}$ to (\ref{eq:minTFWuV}) satisfies the
elliptic equation
\begin{equation} \label{eq:Euler_VNc}
- \frac{C_{\rm W}}2 \Delta u_{N_c} + \Pi_{N_c} \left[ 
\frac 53 C_{\rm TF} |u_{N_c}|^{4/3} u_{N_c}+ V^{\rm ion}u_{N_c} + V_{|u_{N_c}|^2}^{\rm Coulomb} u_{N_c} 
 \right] = \lambda_{N_c}u_{N_c},
\end{equation}
for some $\lambda_{N_c} \in \R$.

\medskip

\begin{theorem} \label{Th:TFW} For each $N_c \in \N$, we denote by
  $u_{N_c}$ a minimizer to (\ref{eq:minTFWuV}) such that  
  $(u_{N_c},u)_{L^2_\#} \ge 0$ and, for each $N_c
  \in \N$ and $N_g \ge 4N_c+1$, we 
  denote by $u_{N_c,N_g}$ a minimizer to (\ref{eq:minTFWuN}) such that 
  $(u_{N_c,N_g},u)_{L^2_\#} \ge 0$. Then for $N_c$ large enough, $u_{N_c}$
  and $u_{N_c,N_g}$ are unique, and the following estimates hold true
\begin{eqnarray}
\|u_{N_c}-u\|_{H^s_\#} &\le& C_s N_c^{-(m-s+1/2-\epsilon)}
\label{eq:estim_Nc_u} \\
|\lambda_{N_c}-\lambda| &\le & C N_c^{-(2m-1-\epsilon)}
\label{eq:estim_Nc_lambda} \\
\gamma \|u_{N_c}-u\|_{H^1_\#}^2 \le I^{\rm TFW}_{N_c} -I^{\rm TFW}
&\le& C \|u_{N_c}-u\|_{H^1_\#}^2
\label{eq:estim_Nc_I}
\\
\|u_{N_c,N_g}-u_{N_c}\|_{H^s_\#} &\le & C_s \, 
N_c^{3/2+(s-1)_+} N_g^{-m}, \label{eq:estim_NcNg_u}  \\ 
|\lambda_{N_c,N_g}-\lambda_{N_c}| &\le& C 
  N_c^{3/2}N_g^{-m},\label{eq:estim_NcNg_lambda} \\
|I^{\rm TFW}_{N_c,N_g} -I^{\rm TFW}_{N_c}| &\le & C
N_c^{3/2}N_g^{-m} , \label{eq:estim_NcNg_I}
\end{eqnarray}
for all $- m +3/2 < s < m+1/2$ and for some constants $\gamma > 0$, $C
\ge 0$ and $C_s \ge 0$ independent of $N_c$ and $N_g$.
\end{theorem}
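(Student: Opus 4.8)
The plan is to factor the analysis through the two levels of discretization: first compare the variational (spectral) minimizer $u_{N_c}$ of (\ref{eq:minTFWuV}) with $u$, then compare the pseudospectral minimizer $u_{N_c,N_g}$ of (\ref{eq:minTFWuN}) with $u_{N_c}$; the bounds on $u_{N_c,N_g}-u$ follow by the triangle inequality. The one structural fact that drives everything is that the TFW problem is \emph{nondegenerate} at $u$: since ${\mathcal E}^{\rm TFW}$ is strictly convex in $\rho$ and $u=\sqrt{\rho^0}>0$ is its unique minimizer, the second-order optimality condition holds in coercive form, i.e. there is $c>0$ with $\langle (E^{\rm TFW})''(u)w,w\rangle-2\lambda\|w\|_{L^2_\#}^2\ge c\|w\|_{H^1_\#}^2$ for all $w\in H^1_\#(\Gamma)$ with $(u,w)_{L^2_\#}=0$; this is classical for TFW (it follows from the strict convexity together with a compactness argument) and is what makes Part~I amenable to a complete analysis.

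For the spectral estimates I would proceed as follows. A standard compactness argument (boundedness of discrete minimizers in $H^1_\#$ by coercivity of $E^{\rm TFW}$ on the constraint set, lower semicontinuity, density of $\bigcup_{N_c}V_{N_c}$ in $H^1_\#$) gives $u_{N_c}\to u$ in $H^1_\#$; an elliptic-regularity bootstrap on (\ref{eq:Euler_VNc}) — using that $\Pi_{N_c}$ is a contraction on each $H^s_\#$, that $\lambda_{N_c}$ stays bounded, and that, once $u_{N_c}\to u$ in $C^0_\#(\Gamma)$ so that $2u_{N_c}\ge\min_\Gamma u>0$ for $N_c$ large, $u_{N_c}^{10/3}$ is as regular as $u_{N_c}$ — then shows $u_{N_c}$ is bounded in $H^s_\#$ for every $s<m+1/2$, hence converges to $u$ strongly in all such $H^s_\#$. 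Writing $u_{N_c}-u=(u_{N_c}-\Pi_{N_c}u)+(\Pi_{N_c}u-u)$, testing (\ref{eq:Euler_VNc}) and the Euler equation for $u$ against $u_{N_c}-\Pi_{N_c}u$, and invoking the coercivity above (valid up to a quadratic remainder which is $o(\|u_{N_c}-u\|_{H^1_\#}^2)$ since $\|u_{N_c}-u\|_{L^\infty}\to 0$) together with the norm constraint, one gets the quasi-optimal estimate $\|u_{N_c}-u\|_{H^1_\#}\le C\|u-\Pi_{N_c}u\|_{H^1_\#}\le CN_c^{-(m-1/2-\epsilon)}$ by (\ref{eq:app-Fourier}) and $u\in H^{m+1/2-\epsilon}_\#(\Gamma)$. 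For $s>1$ the same splitting works, bounding the $V_{N_c}$-component by the inverse inequality $\|w_{N_c}\|_{H^s_\#}\le CN_c^{s-1}\|w_{N_c}\|_{H^1_\#}$ and the remainder by (\ref{eq:app-Fourier}); for $s<1$ one runs an Aubin--Nitsche duality argument against the operator linearizing (\ref{eq:Euler_VNc}) at $u$, whose coefficients inherit the $H^{m-3/2-\epsilon}_\#$ regularity of $V^{\rm ion}$ — the least regular datum — which is precisely what limits the negative-norm improvement to $s>-m+3/2$. This gives (\ref{eq:estim_Nc_u}). The energy estimate (\ref{eq:estim_Nc_I}) comes from a second-order Taylor expansion of $E^{\rm TFW}$ at $u$: the first-order term equals $-\lambda\|u_{N_c}-u\|_{L^2_\#}^2$ because of the constraint, hence is of second order and negligible; the upper bound follows by evaluating $E^{\rm TFW}$ at a constraint-respecting near-best approximation in $V_{N_c}$, and the lower bound from coercivity. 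Finally (\ref{eq:estim_Nc_lambda}) follows by testing (\ref{eq:Euler_VNc}) and the Euler equation for $u$ against $u_{N_c}$ and $u$: the difference $\lambda_{N_c}-\lambda$ is a sum of terms $O(\|u_{N_c}-u\|_{H^1_\#}^2)$ and of terms $\langle g,u_{N_c}-u\rangle$ with $g\in H^{m-3/2-\epsilon}_\#(\Gamma)$ (essentially $g\propto\Delta u$), the latter estimated by $\|u_{N_c}-u\|_{H^{-(m-3/2-\epsilon)}_\#}$ via the negative-$s$ case of (\ref{eq:estim_Nc_u}); this doubling yields the rate $N_c^{-(2m-1-\epsilon)}$.

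For the pseudospectral estimates, fix $N_c$ large and view $E^{\rm TFW}_{N_g}$ as a perturbation of $E^{\rm TFW}$ on the admissible subset of $V_{N_c}$. Because $N_g\ge 4N_c+1$, the operator $\cI_{N_g}$ restricts to the identity on $V_{2N_c}$, and (\ref{eq:ineg_INg_2}) gives $\int_\Gamma\cI_{N_g}(|v_{N_c}|^{4/3}v_{N_c}w_{N_c})=\int_\Gamma\cI_{N_g}(|v_{N_c}|^{4/3})\,v_{N_c}w_{N_c}$, so the mismatch between the discrete and continuous nonlinear terms — in the energy, its gradient, and its Hessian on $V_{N_c}$ — is controlled by $\Pi_{2N_c}(\cI_{N_g}-1)$ applied to the profiles $|v_{N_c}|^{4/3}$ and $|v_{N_c}|^{10/3}$. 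Near $u_{N_c}$ (say in a fixed $H^\sigma_\#$-ball, $3/2<\sigma<m+1/2$) these profiles are, thanks to the uniform-in-$N_c$ Sobolev bounds on $u_{N_c}$ from the previous step and the lower bound $2u_{N_c}\ge\min_\Gamma u$, uniformly bounded in $H^{\sigma'}_\#$ for every $\sigma'<m+1/2$, so the very computation behind (\ref{eq:ineg_INg_6}) shows their aliasing projections are $O(N_c^{3/2}N_g^{-m})$, while (\ref{eq:ineg_INg_6}) disposes of $V^{\rm ion}$ directly. Hence the residual of the Euler equation of (\ref{eq:minTFWuN}) at $u_{N_c}$ is $O(N_c^{3/2}N_g^{-m})$ in $H^{-1}_\#$, and the Hessian of $E^{\rm TFW}_{N_g}$ differs from that of $E^{\rm TFW}$ by $O(N_c^\beta N_g^{-m})$ in operator norm for some $\beta<m$, hence by $o(1)$ uniformly over $N_g\ge 4N_c+1$. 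Since $u_{N_c}$ is a nondegenerate minimizer of $E^{\rm TFW}$ on the constraint set of $V_{N_c}$ with coercivity margin uniform in $N_c$, the standard stability theory of nondegenerate minimizers under perturbation produces, for $N_c$ large and all $N_g\ge 4N_c+1$, a unique critical point $u_{N_c,N_g}$ near $u_{N_c}$ with $\|u_{N_c,N_g}-u_{N_c}\|_{H^1_\#}\le CN_c^{3/2}N_g^{-m}$; uniqueness of the \emph{global} discrete minimizer and the fact that it coincides with this critical point can be obtained by a uniform stability argument or by noting that $E^{\rm TFW}_{N_g}$, like $E^{\rm TFW}$, is strictly convex as a functional of the density $\rho=|v_{N_c}|^2$ (since $t\mapsto t^{5/3}$, hence $\rho\mapsto\sum_{x\in\cG_{N_g}\cap\Gamma}\rho(x)^{5/3}$, is convex). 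Inverse inequalities in $V_{N_c}$ upgrade the $H^1_\#$ bound to (\ref{eq:estim_NcNg_u}), the exponent $3/2+(s-1)_+$ reflecting the $N_c^{s-1}$ factor for $s>1$. Then (\ref{eq:estim_NcNg_I}) follows from $I^{\rm TFW}_{N_c,N_g}-I^{\rm TFW}_{N_c}=[E^{\rm TFW}_{N_g}-E^{\rm TFW}](u_{N_c,N_g})+[E^{\rm TFW}(u_{N_c,N_g})-E^{\rm TFW}(u_{N_c})]$, the first summand being $O(N_c^{3/2}N_g^{-m})$ and the second $O(\|u_{N_c,N_g}-u_{N_c}\|_{H^1_\#}^2)$ and hence negligible; and (\ref{eq:estim_NcNg_lambda}) follows by testing the two discrete Euler equations against $u_{N_c,N_g}$ and $u_{N_c}$, every resulting term being $O(\|u_{N_c,N_g}-u_{N_c}\|_{H^1_\#})+O(N_c^{3/2}N_g^{-m})=O(N_c^{3/2}N_g^{-m})$ — here, in contrast with $\lambda_{N_c}-\lambda$, no negative-norm gain is needed.

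The main obstacle, besides establishing the coercivity of the second-order condition (classical, but genuinely the reason TFW is tractable), is to keep every constant independent of $N_c$ in the pseudospectral step: the exactness of $\cI_{N_g}$ on $V_{2N_c}$, the identity (\ref{eq:ineg_INg_2}), and — crucially — the uniform-in-$N_c$ Sobolev bounds on $u_{N_c}$ furnished by the spectral analysis are exactly what prevents the aliasing error of the nonlinear term from contributing powers of $N_c$ beyond the $N_c^{3/2}$ (and $N_c^{(s-1)_+}$) already present. A related subtlety is the ordering of the bootstrap — the qualitative $C^0$-convergence and the uniform Sobolev bounds must be in hand before the sharp estimates, and the perturbation argument in the pseudospectral step must be made uniform over \emph{all} $N_g\ge 4N_c+1$, not just for $N_g$ large relative to $N_c$.
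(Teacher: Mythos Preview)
Your spectral analysis (estimates (\ref{eq:estim_Nc_u})--(\ref{eq:estim_Nc_I})) follows essentially the paper's route: qualitative convergence by energy comparison, elliptic bootstrap on (\ref{eq:Euler_VNc}) to uniform $H^{m+1/2-\epsilon}_\#$ bounds once $u_{N_c}>0$, quasi-optimal $H^1_\#$ error from the coercivity of ${E^{\rm TFW}}''(u)-2\lambda$, Aubin--Nitsche duality against the adjoint problem (\ref{eq:adjoint}) for negative norms, inverse inequalities for $s>1$, and the eigenvalue doubling via (\ref{eq:pre_estim_lambda}). This part is correct and matches the paper.

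For the pseudospectral part you take a genuinely different route: an implicit-function/perturbation argument centred at $u_{N_c}$, producing a local critical point of $E^{\rm TFW}_{N_g}$ on the sphere within $O(N_c^{3/2}N_g^{-m})$ of $u_{N_c}$. The paper instead works directly with the global minimizer $u_{N_c,N_g}$: it first derives a~priori bounds on $u_{N_c,N_g}$ in $H^1_\#$, $H^2_\#$, then $H^{7/2+\epsilon}_\#$ straight from (\ref{eq:EDP_uNcNg}) and Lemma~\ref{lem:technical}, applies the \emph{global} coercivity (\ref{eq:pre_uniqueness}) to get $\frac\gamma2\|u_{N_c,N_g}-u_{N_c}\|_{H^1_\#}^2\le E^{\rm TFW}(u_{N_c,N_g})-E^{\rm TFW}(u_{N_c})$, and bounds the right-hand side by aliasing errors. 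This forces a two-pass bootstrap (first only $N_c^{3/2}N_g^{-7/2}$, since the regularity of $g(u_{N_c},u_{N_c,N_g})$ is capped until one knows $u_{N_c,N_g}>0$; then $L^\infty$ convergence, positivity, and the sharp $N_g^{-m}$). Your IFT approach neatly sidesteps this two-pass structure by working in a neighbourhood of $u_{N_c}$ where positivity is automatic.

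There is, however, a gap in your passage from the local IFT critical point to the \emph{global} minimizer. The convexity-in-density argument you offer does not transfer to the discrete problem: while $\rho\mapsto\sum_{x\in\cG_{N_g}\cap\Gamma}\rho(x)^{5/3}$ is strictly convex, the admissible set of densities $\{|v_{N_c}|^2:v_{N_c}\in V_{N_c},\ \|v_{N_c}\|_{L^2_\#}^2=\cN\}$ is \emph{not} convex --- unlike the continuum set $\{\rho\ge0:\sqrt\rho\in H^1_\#,\ \int_\Gamma\rho=\cN\}$, because $\sqrt\rho$ need not lie in $V_{N_c}$ --- so strict convexity of the density functional does not yield uniqueness of the constrained minimizer in $V_{N_c}$. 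Your alternative ``uniform stability argument'' is not spelled out; to make it work you need a~priori control on $u_{N_c,N_g}$ itself (not merely on functions near $u_{N_c}$), and that is precisely what the paper's direct approach supplies through the Euler equation bounds and the global inequality (\ref{eq:pre_uniqueness}).
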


\medskip

\begin{remark} \label{Th:OF}
More complex orbital-free models have been proposed in the recent years
\cite{orbitalfree}, which are used to perform
multimillion atom DFT calculations. Some of these models however are not
well posed  (the energy functional is not bounded below 
\cite{BlancCances}), and 
the others are not well understood from a mathematical point of
view. For these reasons, we will not deal with those models in this
article.
\end{remark}

\begin{proof}[of Theorem~\ref{Th:TFW}]
The estimates (\ref{eq:estim_Nc_u}), (\ref{eq:estim_Nc_lambda}) and
(\ref{eq:estim_Nc_I}) originate from arguments already introduced in
\cite{CCM}. For brevity, we only recall the main steps of the
proof and leave the details to the reader. 

\medskip

\noindent
The difference between (\ref{eq:minTFWu}) and the problem dealt
with in \cite{CCM} is the presence of the Coulomb term
$D_\Gamma(|v|^2,|v|^2)$, for which the following estimates are
available: 
\begin{eqnarray}
0 \le D_\Gamma(\rho,\rho) & \le & C \|\rho\|_{L^2_\#}^2, \quad \mbox{
  for all } \rho \in L^2_\#(\Gamma),  \label{eq:estim_D1} \\
|D_\Gamma(uv,uw)| & \le  & C \|v\|_{L^2_\#} \|w\|_{L^2_\#},  \quad \mbox{
  for all } (v,w) \in (L^2_\#(\Gamma))^2, \label{eq:estim_D2} \\
|D_\Gamma(\rho,vw)| & \le & C \|\rho\|_{L^2_\#} \|v\|_{L^2_\#}
\|w\|_{L^2_\#}, \quad \mbox{
  for all } (\rho,v,w) \in (L^2_\#(\Gamma))^3, \qquad
\label{eq:estim_D3} \\
\|V_\rho^{\rm Coulomb}\|_{L^\infty} &\le& C \|\rho\|_{L^2_\#} , \quad \mbox{
  for all } \rho \in L^2_\#(\Gamma) \label{eq:estim_D4} \\
\|V_\rho^{\rm Coulomb}\|_{H^{s+2}_\#} &\le& C \|\rho\|_{H^s_\#}, \quad \mbox{
  for all } \rho \in H^s_\#(\Gamma).\label{eq:estim_D5}
\end{eqnarray}
Here and in the sequel, $C$ denotes a non-negative constant which may
depend on $\Gamma$, $V^{\rm ion}$ and $\cN$, but not on the
discretization parameters.

Let $F(t)=C_{\rm TF}t^{5/3}$ and $f(t) = F'(t) = \frac 53 C_{\rm
  TF}t^{2/3}$. The function $F$ is in $C^1([0,+\infty))\cap 
C^\infty((0,+\infty))$, is strictly convex on $[0,+\infty)$, and
for all $(t_1,t_2) \in \R_+ \times \R_+$,
\begin{equation} \label{eq:fff}
|f(t_2^2)t_2- f(t_1^2)t_2 - 2f'(t_1^2)t_1^2(t_2-t_1)| 
\le \frac{70}{27} C_{\rm TF} \max(t_1^{1/3},t_2^{1/3}) \, |t_2-t_1|^2.
\end{equation}
The first and second derivatives of
$E^{\rm TFW}$ at the unique positive minimizer $u=\sqrt{\rho^0}$ of
(\ref{eq:minTFWu}) are respectively given by
\begin{eqnarray*}
&& \langle {E^{\rm TFW}}'(u),v \rangle_{H^{-1}_\#,H^1_\#} 
= 2 \langle H_{\rho^0}u,v \rangle \\
&& \langle {E^{\rm TFW}}''(u)v,w \rangle_{H^{-1}_\#,H^1_\#} 
= 2 \langle H_{\rho^0}v,w \rangle + 4D_\Gamma(uv,uw)+ 4 \int_\Gamma
f'(|u|^2) |u|^2 vw, 
\end{eqnarray*}
where we have denoted by 
$$
H_{\rho} = - \frac{C_{\rm W}}2 \Delta + f(\rho) + V^{\rm ion} +
V_{\rho}^{\rm Coulomb}   
$$
the TFW Hamiltonian associated with the density $\rho$. We recall (see  
\cite{Lieb} and the proof of Lemma~2 in \cite{CCM}) that (i) $u \in
H^{m+1/2-\epsilon}_\#(\Gamma) \cap C^{2}_\#(\Gamma)$ for each $\epsilon
> 0$, (ii) $u > 0$ on $\R^3$, (iii) $\lambda$ is the
ground state eigenvalue of $H_{\rho^0}$ and is non-degenerate.
Using (\ref{eq:estim_D1}), (\ref{eq:estim_D2}) and the fact that $f' >
0$ on $(0,+\infty)$, we can then show (see the proof of Lemma~1 in
\cite{CCM}) that there exist $\beta > 0$, $\gamma > 0$ and $M \ge 0$
such that for all $v \in H^1_\#(\Gamma)$,
\begin{eqnarray}
&& 
0 \le \langle (H_{\rho^0}-\lambda)v,v \rangle_{H^{-1}_\#,H^1_\#}  \le
M \|v\|_{H^1_\#}^2 
\\
&& \beta \|v\|_{H^1_\#}^2  \le 
\langle ({E^{\rm TFW}}''(u)-2\lambda)v,v \rangle_{H^{-1}_\#,H^1_\#}
\le M \|v\|_{H^1_\#}^2, \label{eq:NRJsecondContinue}
\end{eqnarray}
and for all $v \in H^1_\#(\Gamma)$ such that $\|v\|_{L^2_\#}=\cN^{1/2}$ and
$(v,u)_{L^2_\#} \ge 0$,
\begin{equation} \label{eq:borne_1}
\gamma \|v-u\|_{H^1_\#}^2 \le 
\langle (H_{\rho^0}-\lambda)(v-u),(v-u) \rangle_{H^{-1}_\#,H^1_\#}. 
\end{equation}
Remarking that
\begin{eqnarray} \!\!\!\!\!\!\!\!\!\!\!\!\!
E^{\rm TFW}(u_{N_c})- E^{\rm TFW}(u) \!\!\! & = & \!\!\! \langle
(H_{\rho^0}-\lambda)(u_{N_c}-u),(u_{N_c}-u)\rangle_{H^{-1}_\#,H^1_\#}
\nonumber \\
\!\!\! && \!\!\! + \frac 12
D_\Gamma(|u_{N_c}|^2-|u|^2,|u_{N_c}|^2-|u|^2) \nonumber \\ \!\!\!
&& \!\!\! + \int_\Gamma F(|u_{N_c}|^2)-F(|u|^2)-f(|u|^2)(|u_{N_c}|^2-|u|^2)
\label{eq:erreur_TFW_Nc}
\end{eqnarray}
and using (\ref{eq:borne_1}), the positivity of the bilinear form
$D_\Gamma$, and the convexity of the function $F$, we obtain that 
$$
I^{\rm TFW}_{N_c}-I^{\rm TFW} = E^{\rm TFW}(u_{N_c})- E^{\rm TFW}(u) 
\ge \gamma \|u_{N_c}-u\|_{H^1_\#}^2. 
$$
For each $N_c \in \N$, 
$\widetilde u_{N_c}=\cN^{1/2}\Pi_{N_c}u/\|\Pi_{N_c}u\|_{L^2_\#}$ satisfies
$\widetilde u_{N_c}$ and $\|\widetilde u_{N_c}\|_{L^2_\#} =\cN^{1/2}$, and the
sequence $(\widetilde u_{N_c})_{N_c \in \N}$ converges to $u$ in
$H^{m+1/2-\epsilon}_\#(\Gamma)$ for each $\epsilon > 0$. As the
functional $E^{\rm TFW}$ is continuous on $H^1_\#(\Gamma)$, we have 
$$
 \|u_{N_c}-u\|_{H^1_\#}^2 \le \gamma^{-1} \left(I^{\rm TFW}_{N_c}-I^{\rm
     TFW}\right) 
\le \gamma^{-1} \left(E^{\rm TFW}(\widetilde u_{N_c}) - E^{\rm
    TFW}(u)\right) 
\mathop{\longrightarrow}_{N_c \to \infty} 0.
$$
Hence, $(u_{N_c})_{N_c \in \N}$ converges to $u$ in
$H^1_\#(\Gamma)$, and we also have
\begin{eqnarray*}
\lambda_{N_c} & = & \cN^{-1} \bigg[ \frac 12 \int_{\Gamma} |\nabla
u_{N_c}|^2
+ \int_\Gamma f(|u_{N_c}|^2) |u_{N_c}|^2
+ \int_\Gamma V^{\rm ion} |u_{N_c}|^2 + D_\Gamma(|u_{N_c}|^2,|u_{N_c}|^2)
 \bigg] \\
& \dps \mathop{\longrightarrow}_{N_c \to \infty} & \cN^{-1} \bigg[ 
\frac 12 \int_{\Gamma} |\nabla u|^2 + \int_\Gamma f(|u|^2) |u|^2 
+ \int_\Gamma V^{\rm ion} |u|^2 + D_\Gamma(|u|^2,|u|^2)
\bigg] \\
& = & \lambda.
\end{eqnarray*}
As $f(|u_{N_c}|^2)u_{N_c}+ V^{\rm ion}u_{N_c} + V_{|u_{N_c}|^2}^{\rm Coulomb} u_{N_c} 
$ is bounded in $L^2_\#(\Gamma)$, uniformly in
$N_c$, we deduce from (\ref{eq:Euler_VNc}) that the sequence
$(u_{N_c})_{N_c \in \N}$ is bounded in $H^2_\#(\Gamma)$, hence in
$L^\infty(\Gamma)$. Now 
\begin{eqnarray*}
\Delta (u_{N_c}-u) & = & 2C_{\rm W}^{-1} \bigg[ \Pi_{N_c} \bigg( 
f(|u_{N_c}|^2)u_{N_c} - f(|u|^2)u + V^{\rm ion}(u_{N_c}-u) + 
 \\
&& \qquad \qquad \qquad V_{|u_{N_c}|^2}^{\rm Coulomb}
u_{N_c}-V_{|u|^2}^{\rm Coulomb}u 
 \bigg) \\
&  & \qquad  \quad + \left(1-\Pi_{N_c}\right) \left(f(|u|^2)u + V^{\rm ion} u + V_{|u|^2}^{\rm
    Coulomb}u \right) \\
&& \qquad  \quad 
- \lambda_{N_c}(u_{N_c}-u) - (\lambda_{N_c}-\lambda) u \bigg].
\end{eqnarray*}
Observing that the right-hand side goes to zero in $L^2_\#(\Gamma)$ when
$N_c$ goes to infinity, we obtain that $(u_{N_c})_{N_c \in \N}$
converges to $u$ in $H^2_\#(\Gamma)$, and therefore in
$C^{0,1/2}_\#(\Gamma)$. By a simple bootstrap argument, one can see that
the convergence also holds in $H^{m+1/2-\epsilon}_\#(\Gamma)$ for each
$\epsilon > 0$. 
The upper bound in~(\ref{eq:estim_Nc_I}) is obtained from
(\ref{eq:erreur_TFW_Nc}), remarking that
\begin{eqnarray*}
0 & \le & \int_\Gamma
F(|u_{N_c}|^2)-F(|u|^2)-f(|u|^2)(|u_{N_c}|^2-|u|^2) \\
& \le & \frac{35}9 C_{\rm TF} \int_\Gamma \max(|u_{N_c}|^{4/3},|u|^{4/3})
|u_{N_c}-u|^2 \\
& \le & \frac{35}9 C_{\rm TF} \left(\max_{N_c \in \N}
  \|u_{N_c}\|_{L^\infty}\right)^{4/3} \, \|u_{N_c}-u\|_{L^2_\#}^2
\end{eqnarray*}
and that
\begin{eqnarray*}
0 & \le & D_\Gamma(|u_{N_c}|^2-|u|^2,|u_{N_c}|^2-|u|^2) \le  C
\||u_{N_c}|^2-|u|^2\|_{L^2_\#}^2 \\ 
& \le & 4 C  \, \left(\max_{N_c \in \N} \|u_{N_c}\|_{L^\infty}\right)^2
 \, \|u_{N_c}-u\|_{L^2_\#}^2.
\end{eqnarray*}

\medskip

\noindent
The uniqueness of $u_{N_c}$ for $N_c$ large enough can then be checked
as follows. First, $(u_{N_c},\lambda_{N_c})$ satisfies the variational equation
$$
\forall v_{N_c} \in V_{N_c}, \quad 
\langle
(H_{|u_{N_c}|^2}-\lambda_{N_c})u_{N_c},v_{N_c}\rangle_{H^{-1}_\#,H^1_\#}
= 0.  
$$
Therefore $\lambda_{N_c}$ is the variational approximation  in $V_{N_c}$
of some eigenvalue of $H_{|u_{N_c}|^2}$. As $(u_{N_c})_{N_c \in
  \N}$ converges to $u$ in $L^\infty(\Gamma)$,
$H_{|u_{N_c}|^2}-H_{\rho^0}$ converges to $0$ in operator
norm. Consequently, the $n^{\rm th}$ eigenvalue of $H_{|u_{N_c}|^2}$
converges to the $n^{\rm th}$ eigenvalue of $H_{\rho^0}$ when $N_c$ goes
to infinity, the convergence being uniform in $n$. Together
with the fact that the sequence $(\lambda_{N_c})_{N_c \in \N}$ converges
to $\lambda$, the non-degenerate ground state eigenvalue of
$H_{\rho^0}$, this implies that for $N_c$ large enough, $\lambda_{N_c}$
is the ground state eigenvalue of $H_{|u_{N_c}|^2}$ in $V_{N_c}$ and
for all $v_{N_c} \in V_{N_c}$ such that $\|v_{N_c}\|_{L^2_\#}=\cN^{1/2}$ and
$(v_{N_c},u_{N_c})_{L^2_\#} \ge 0$,
\begin{eqnarray}
E^{\rm TFW}(v_{N_c})- E^{\rm TFW}(u_{N_c}) & = & \langle
(H_{|u_{N_c}|^2}-\lambda_{N_c})(v_{N_c}-u_{N_c}),(v_{N_c}-u_{N_c})\rangle_{H^{-1}_\#,H^1_\#}  \nonumber
\\
&& + \frac 12 D_\Gamma(|v_{N_c}|^2-|u_{N_c}|^2,|v_{N_c}|^2-|u_{N_c}|^2)\nonumber \\
&& + \int_\Gamma
F(|v_{N_c}|^2)-F(|u_{N_c}|^2)-f(|u_{N_c}|^2)(|v_{N_c}|^2-|u_{N_c}|^2)
\nonumber \\
& \ge &  \langle
(H_{|u_{N_c}|^2}-\lambda_{N_c})(v_{N_c}-u_{N_c}),(v_{N_c}-u_{N_c})\rangle_{H^{-1}_\#,H^1_\#} \nonumber
\\
& \ge & \frac{\gamma}2 \|v_{N_c}-u_{N_c}\|_{H^1_\#}^2. \label{eq:pre_uniqueness}
\end{eqnarray}
It easily follows that for $N_c$ large enough, (\ref{eq:minTFWuV}) has a
unique minimizer $u_{N_c}$ such that $(u_{N_c},u)_{L^2_\#} \ge 0$.

\medskip

Let us now establish the rates of convergence of
$|\lambda_{N_c}-\lambda|$ and $\|u_{N_c}-u\|_{H^s_\#}$. First,
\begin{eqnarray*}
\lambda_{N_c}-\lambda & = &
\cN^{-1} \left[\langle (H_{|u|^2}-\lambda)(u_{N_c}-u),(u_{N_c}-u)\rangle_{H^{-1}_\#,H^1_\#}
+ \int_\Gamma w_{N_c} (u_{N_c}-u) \right]
\end{eqnarray*}
with
$$
w_{N_c}=\frac{f(|u_{N_c}|^2)-f(|u|^2)}{u_{N_c}-u}|u_{N_c}|^2
+ V^{\rm Coulomb}_{|u_{N_c}|^2}(u_{N_c}+u).
$$
We know that the sequence $(u_{N_c})_{N_c \in \N}$ converges to $u$ in
$H^{m+1/2-\epsilon}_\#(\Gamma)$ and that $u > 0$ in
$\R^3$. Consequently, for $N_c$ large enough,
the function $u_{N_c}$ (which is continuous and $\cR$-periodic) is
bounded away from $0$, uniformly in $N_c$. As $f \in
C^\infty((0,+\infty))$, the function $w_{N_c}$ is uniformly bounded in 
$H^{m-3/2-\epsilon}_\#(\Gamma)$ (at least for $N_c$ large enough). We
therefore obtain that for all $0 \le r < m-3/2$, there exists a constant
$C_r \in \R_+$ such that for all $N_c$ large enough,
\begin{equation} \label{eq:pre_estim_lambda}
|\lambda_{N_c}-\lambda| \le C_r \left( \|u_{N_c}-u\|_{H^1}^2
+ \| u_{N_c}-u \|_{H^{-r}} \right).
\end{equation}
In order to evaluate the $H^1_\#$-norm of the error $(u_{N_c}-u)$, we first
notice that 
\begin{equation}\label{eq:4:1:H1}
\forall v_{N_c} \in V_{N_c}, \quad 
\| u_{N_c}-u \|_{H^1_\#} \le \| u_{N_c} - v_{N_c}  \|_{H^1_\#} 
+\| v_{N_c}  - u \|_{H^1_\#},
\end{equation}
and that 
 \begin{eqnarray} \!\!\!\!\!\!\!\!\!\!\!\!
 \| u_{N_c}  - v_{N_c} \|_{H^1_\#}^2 & \le & \beta^{-1} \,  
\langle ({E^{\rm TFW}}''(u) - 2\lambda)(u_{N_c}  - v_{N_c} ),
(u_{N_c}  - v_{N_c} ) \rangle_{H^{-1}_\#,H^1_\#} \nonumber \\
& = & \beta^{-1}  \bigg( \langle ({E^{\rm TFW}}''(u) - 2\lambda)(u_{N_c}  - u),
(u_{N_c}  - v_{N_c} ) \rangle_{H^{-1}_\#,H^1_\#} \nonumber \\ 
&  & \quad  +\langle ({E^{\rm TFW}}''(u) - 2\lambda)(u- v_{N_c} ),(u_{N_c}  -
  v_{N_c} ) \rangle_{H^{-1}_\#,H^1_\#} \bigg). 
 \label{eq:4:3:H1}
 \end{eqnarray}
For all $z_{N_c} \in V_{N_c}$, 
\begin{eqnarray}
&& \!\!\!\!\!\!\!\!\!\!\!\!\!\!\!\!
\langle ({E^{\rm TFW}}''(u)-2\lambda)(u_{N_c}-u),z_{N_c}
\rangle_{H^{-1}_\#,H^1_\#} \nonumber\\ & = & 
- 2 \int_\Gamma [f(|u_{N_c}|^2)u_{N_c}-f(|u|^2)u - 2 f'(|u|^2)|u|^2
(u_{N_c}-u)] z_{N_c} \nonumber \\
& & - 2D_\Gamma((u_{N_c}-u)(u_{N_c}+u),(u_{N_c}-u)z_{N_c})
- 2D_\Gamma((u_{N_c}-u)^2,uz_{N_c}) \nonumber \\ &&
+ 2 (\lambda_{N_c}-\lambda) \int_\Gamma u_{N_c}z_{N_c}. \label{eq:new36}
\end{eqnarray}
On the other hand, we have for all $v_{N_c} \in V_{N_c}$ such that
$\|v_{N_c}\|_{L^2_\#}={\mathcal N}^{1/2}$, 
$$
\int_\Gamma u_{N_c} (u_{N_c}-v_{N_c}) =\cN - \int_\Gamma  u_{N_c}
v_{N_c}
= \frac 12 \|u_{N_c}-v_{N_c}\|_{L^2_\#}^2.
$$
Using (\ref{eq:estim_D3}), (\ref{eq:fff}), (\ref{eq:pre_estim_lambda})
with $r=0$ and the above
equality, we therefore obtain for all $v_{N_c} \in V_{N_c}$ such
 that $\|v_{N_c}\|_{L^2_\#}=\cN^{1/2}$,
 \begin{eqnarray}
&& 
\!\!\!\!\!\!\!\!\!\!\!\!\!
\left| \langle ({E^{\rm TFW}}''(u) - 2\lambda)(u_{N_c}-u),(u_{N_c}-v_{N_c})
  \rangle_{H^{-1}_\#,H^1_\#}\right|
\nonumber \\
&& \le  C \bigg( \|u_{N_c}-u\|^2_{H^1_\#} \, \| u_{N_c}-v_{N_c}
\|_{H^1_\#}  \nonumber \\
&& \qquad  \quad + 
\left( \|u_{N_c}-u\|_{H^1_\#}^2 + \|u_{N_c}-u\|_{L^{2}_\#} \right) 
\| u_{N_c}-v_{N_c} \|_{L^2_\#}^2  \bigg). \label{eq:4:3:H2}
 \end{eqnarray}
Therefore, for $N_c$ large enough, we have for all
$v_{N_c} \in V_{N_c}$ such that $\|v_{N_c}\|_{L^2_\#}=\cN^{1/2}$,  
$$
 \| u_{N_c}  - v_{N_c} \|_{H^1_\#} \le   C 
 \left(   \|u_{N_c}-u\|_{H^1_\#}^2  + \|v_{N_c}-u \|_{H^1_\#} \right) .
$$
Together with (\ref{eq:4:1:H1}), this shows that
there exists $N \in \N$ and $C \in \R_+$ such that for all ${N_c} \ge N$, 
$$
\forall v_{N_c} \in V_{N_c} \mbox{ s.t. } \|v_{N_c}\|_{L^2_\#}=\cN^{1/2},
\quad \|u_{N_c} -u \|_{H^1_\#} \le C  \|v_{N_c}-u \|_{H^1_\#}.   
$$
By a classical argument (see e.g. the proof of Theorem~1 in \cite{CCM}),
we deduce from (\ref{eq:app-Fourier}) and the above inequality that
\begin{equation} \label{eq:estim_H1}
\|u_{N_c} -u \|_{H^1_\#} \le C \min_{v_{N_c} \in V_{N_c}} \|v_{N_c}-u
\|_{H^1_\#} \le C N^{-(m-1/2-\epsilon)},  
\end{equation}
for some constants $C$ independent of $N_c$.

\medskip

\noindent
For $w \in L^{2}_\#(\Gamma)$, we denote by $\psi_w$ the unique solution to the
adjoint problem 
\begin{equation} \label{eq:adjoint}
\left\{ \begin{array}{l}
\mbox{find } \psi_w \in u^\perp \mbox{ such that} \\
\forall v \in u^\perp, \quad \langle ({E^{\rm TFW}}''(u) -
2\lambda)\psi_w,v \rangle_{H^{-1}_\#,H^1_\#} = 
\langle w,v \rangle_{H^{-1}_\#,H^1_\#},
\end{array} \right.
\end{equation}
where
$$
u^\perp = \left\{ v \in H^1_\#(\Gamma) \; | \; \int_\Gamma uv=0 \right\}.
$$
The function $\psi_w$ is solution to the elliptic equation
\begin{eqnarray*} \!\!\!\!\!\!\!\!\!\!\!\!\!\!\!\!
&& - \frac{C_{\rm W}}2 \Delta \psi_w + \left( V^{\rm
    ion}+V_{u^2}^{\rm Coulomb}+f(u^2)+2f'(u^2)u^2-\lambda \right) \psi_w 
+ 2 V_{u\psi_w}^{\rm Coulomb} u \nonumber \\ & & \qquad\qquad =
2 \left(\int_\Gamma f'(u^2)u^3\psi_w+D_{\Gamma}(u^2,u\psi_w)\right) u +
w - (w,u)_{L^2_\#} u, 
\end{eqnarray*}
from which we deduce that if $w \in H^r_\#(\Gamma)$ for some $0 \le r <
m-3/2$, then $\psi_w \in H^{r+2}_\#(\Gamma)$ and  
\begin{equation} \label{eq:borne_psiw}
\| \psi_w \|_{H^{r+2}_\#} \le C_r \| w \|_{H^{r}_\#}, 
\end{equation}
for some constant $C_r$ independent of $w$.
Let $u_{N_c}^*$ be the orthogonal projection, for the $L^2_\#$ inner
product, of  
$u_{N_c}$ on the affine space $\left\{v \in L^2_\#(\Gamma) \, | \,
  \int_\Gamma uv=\cN \right\}$. One has 
$$
u_{N_c} ^* \in H^1_{\#}(\Gamma), \qquad u_{N_c} ^*-u \in u^\perp,  \qquad 
u_{N_c}^*-u_{N_c}  = \frac 1{2\cN} \|u_{N_c}-u \|_{L^2_\#}^2 u, 
$$
from which we infer that
\begin{eqnarray*}
\|u_{N_c} -u\|_{L^2_\#}^2 & = & \int_\Gamma (u_{N_c}-u )(u_{N_c} ^*-u) + 
\int_\Gamma (u_{N_c}-u)(u_{N_c} - u_{N_c} ^*) \\
& = &  \int_\Gamma (u_{N_c}-u)(u_{N_c}^*-u) - \frac 1{2\cN}
\|u_{N_c}-u\|_{L^2_\#}^2 \int_\Gamma (u_{N_c}-u) u  \\
& = &  \int_\Gamma (u_{N_c}-u)(u_{N_c}^*-u) + \frac 1{2\cN}
\|u_{N_c}-u\|_{L^2_\#}^2 \left( \cN - \int_\Gamma u_{N_c} u \right) \\
& = &  \int_\Gamma (u_{N_c}-u)(u_{N_c}^*-u) + \frac 1{4\cN}
\|u_{N_c}-u\|_{L^2_\#}^4 \\ 
& = &   \langle u_{N_c}-u  ,u_{N_c}^*-u \rangle_{H^{-1}_\#,H^1_\#}  + \frac 1{4\cN}
\|u_{N_c}-u\|_{L^2_\#}^4  \\ 
& = &   \langle ({E^{\rm TFW}}''(u)-2\lambda) \psi_{u_{N_c}-u}, u_{N_c}^*-u
\rangle_{H^{-1}_\#,H^1_\#} + \frac 1{4\cN}
\|u_{N_c}-u\|_{L^2_\#}^4 \\
& = &   \langle  ({E^{\rm TFW}}''-2\lambda)  (u_{N_c}-u), \psi_{u_{N_c}-u}
\rangle_{H^{-1}_\#,H^1_\#} +  \frac 1{4\cN}
\|u_{N_c}-u\|_{L^2_\#}^4 \\ 
& & + \frac 1{2\cN} \|u_{N_c}-u\|_{L^2_\#}^2 
 \langle  ({E^{\rm TFW}}''(u)-2\lambda) u , \psi_{u_{N_c}-u} \rangle_{H^{-1}_\#,H^1_\#} 
 \\ 
& = &   \langle  ({E^{\rm TFW}}''(u)-2\lambda)  (u_{N_c}-u), \psi_{u_{N_c}-u}
\rangle_{H^{-1}_\#,H^1_\#} + \frac 1{4\cN} \|u_{N_c}-u \|_{L^2_\#}^4 \\ 
& & +  \frac{2}{\cN} \|u_{N_c}-u\|_{L^2_\#}^2 \left[\int_\Gamma f'(u^2) u^3
  \psi_{u_{N_c}-u} + D_\Gamma(u^2,u \psi_{u_{N_c}-u}) \right] .
\end{eqnarray*}
For all $\psi_{N_c} \in V_{N_c}$, it therefore holds
\begin{eqnarray}
\|u_{N_c}-u\|_{L^2}^2 & = &   
\langle  ({E^{\rm TFW}}''(u)-2\lambda)  (u_{N_c}-u), \psi_{u_{N_c}-u}-\psi_{N_c}
\rangle_{H^{-1}_\#,H^1_\#} \nonumber \\
& & + \langle  ({E^{\rm TFW}}''(u)-2\lambda)  (u_{N_c}-u),
\psi_{N_c} \rangle_{H^{-1}_\#,H^1_\#} + \frac 1{4\cN} \|u_{N_c}-u
\|_{L^2_\#}^4  \nonumber  \\ 
& & +  \frac{2}{\cN} \|u_{N_c}-u\|_{L^2_\#}^2 \left[\int_\Gamma f'(u^2) u^3
  \psi_{u_{N_c}-u} + D_\Gamma(u^2,u \psi_{u_{N_c}-u}) \right] .
\label{eq:eeeee}
\end{eqnarray}
Using  (\ref{eq:estim_D3}), (\ref{eq:fff}), (\ref{eq:pre_estim_lambda})
with $r=0$ and (\ref{eq:new36}), we obtain that for all $\psi_{N_c} \in V_{N_c}
\cap u^\perp$,
\begin{eqnarray} \!\!\!\!\!\!\!\!\!\!\!\!\!\!\!\!\!\!   
\left| \langle ({E^{\rm TFW}}(u) -2 \lambda)(u_{N_c}-u),\psi_{N_c}
  \rangle_{H^{-1}_\#,H^1_\#}\right|
& \le & C \bigg(  \|u_{N_c}-u\|_{H^{1}_\#}^2 
\nonumber \\ & &
\!\!\!\!\!\!\!\!\!\!\!\!\!\!\!\!\!\!\!\!\!\!\!\!\!\!\!\!\!\!\!\!\!\!\!\!\!\!\!\!\!\!\!\!\!\!\!\!\!\!\!\!\!\!\!\!\!\!\!\!\!\!\!\!\!\!\!\!\!\!\!\!\!\!\!\!\!\!\!\!\!\!\!\!\!\!\!\!\!\!\!\!\!\!\!\!\!\!\!\!\!\!
+ \| u_{N_c}-u \|_{L^{2}_\#} 
\left( \|u_{N_c}-u\|_{H^1_\#}^2 + \|u_{N_c}-u\|_{L^{2}_\#} \right) 
 \bigg) \|\psi_{N_c}\|_{H^1_\#}.  \label{eq:new37}
 \end{eqnarray}
Let us denote by $\Pi^1_{V_{N_c} \cap u^\perp}$ the orthogonal projector
on $V_{N_c} \cap u^\perp$ for the $H^1_\#$ inner product and by 
$\psi_{N_c}^0 = \Pi^1_{V_{N_c} \cap u^\perp} \psi_{u_{N_c}-u}$.
Noticing that 
$$
\|\psi_{N_c}^0\|_{H^1_\#} \le  \|\psi_{u_{N_c}-u}\|_{H^1_\#}
 \le \beta^{-1} M\|u_{N_c}-u\|_{L^2_\#},
$$
we obtain from (\ref{eq:NRJsecondContinue}), (\ref{eq:eeeee}) and
(\ref{eq:new37}) that there exists $N \in \N$ and $C \in 
\R_+$ such that for all ${N_c} \ge N$,
$$
\|u_{N_c}-u\|_{L^2_\#}^2 \le  C \, \bigg(
  \|u_{N_c}-u\|_{L^2_\#} \|u_{N_c}-u\|_{H^{1}_\#}^2 
+   \|u_{N_c}-u\|_{H^1_\#}
\|\psi_{u_{N_c}-u}-\psi_{N_c}^0\|_{H^1_\#} \bigg). 
$$
Lastly, for all $v \in u^\perp$ and all $N_c \in \N^\ast$ 
\begin{equation} \label{eq:minuperp}
\|v - \Pi^1_{V_{N_c} \cap u^\perp} v \|_{H^1_\#}
\le \left(1+\frac{\cN^{1/2}}{2\pi L^{1/2} N_c \int_\Gamma u} \right)  
\|v- \Pi_{N_c} v \|_{H^1_\#},
\end{equation}
so that, in view of (\ref{eq:app-Fourier}) and (\ref{eq:borne_psiw}) 
\begin{eqnarray*}
\|\psi_{u_{N_c}-u}-\psi_{N_c}^0\|_{H^1_\#} &\le& C  \|\psi_{u_{N_c}-u}-
\Pi_{N_c} \psi_{u_{N_c}-u} \|_{H^1_\#} \\ &\le& C N_c^{-1}
\|\psi_{u_{N_c}-u} \|_{H^2_\#} \\ &\le &C N_c^{-1} \|u_{N_c}-u\|_{L^2_\#}.
\end{eqnarray*}
Therefore, 
\begin{eqnarray*}
\|u_{N_c}-u\|_{L^2_\#} & \le &  C \,   \bigg(
 \|u_{N_c}-u\|_{H^{1}_\#}^2  +  N_c^{-1} \|u_{N_c}-u\|_{H^1_\#}  \bigg)
 \\
& \le & C N_c^{-(m+1/2-\epsilon)}.
\end{eqnarray*}
By means of the inverse inequality
\begin{equation} \label{eq:inv_ineq}
\forall v_{N_c} \in V_{N_c}, \quad 
\|v_{N_c}\|_{H^r_\#} \le \left( \frac{2\pi}L \right)^{(r-s)}  {N_c}^{r-s} \|v_{N_c}\|_{H^s_\#}, 
\end{equation}
which holds true for all $s \le r$ and all ${N_c} \ge 1$,
we obtain that
\begin{equation}  \label{eq:error_H1_F}
\|u_{N_c}-u\|_{H^s_\#}  \le  C_s N_c^{-(m-s+1/2-\epsilon)} \qquad 
\mbox{for all } 0 \le s < m+1/2.
\end{equation}
To complete the first part of the proof of Theorem~\ref{Th:TFW}, we
still have to compute the $H^{-r}_\#$-norm of the error $(u_{N_c}-u)$
for $0 < r < m-3/2$. Let $w \in H^{r}_\#(\Gamma)$. Proceeding as above
we obtain   
\begin{eqnarray}
 \int_\Gamma w (u_{N_c}-u) & = &
\langle  ({E^{\rm TFW}}''(u)-2\lambda)(u_{N_c}-u), \Pi^1_{V_{N_c} \cap
  u^\perp} \psi_{w} 
\rangle_{H^{-1}_\#,H^1_\#}  \nonumber \\
& & + \langle  ({E^{\rm
    TFW}}''(u)-2\lambda)(u_{N_c}-u),\psi_{w}-\Pi^1_{V_{N_c} 
  \cap u^\perp}\psi_{w} 
\rangle_{H^{-1}_\#,H^1_\#} \nonumber \\ 
& &+ \frac{2}{\cN} \|u_{N_c}-u\|_{L^2_\#}^2\left[ \int_\Gamma f'(u^2) u^3
  \psi_{w}+ D_\Gamma(u^2,u\psi_w) \right] 
\nonumber\\&&\qquad- \frac 1{2\cN} \|u_{N_c}-u \|_{L^2_\#}^2 \int_\Gamma uw.
\label{eq:intom} 
\end{eqnarray}
Combining
 (\ref{eq:NRJsecondContinue}), (\ref{eq:borne_psiw}), (\ref{eq:new37}), 
(\ref{eq:minuperp}), (\ref{eq:error_H1_F}) and (\ref{eq:intom}), we
obtain that there exists a constant $C \in \R_+$ such that for all
${N_c}$ large enough and all $w \in H^{r}_\#( \Gamma)$,
\begin{eqnarray*}
\int_ \Gamma w (u_{N_c}-u) & \le & C' \left( \|u_{N_c}-u\|_{H^1_\#}^2 
+ {N_c}^{-(r+1)} \|u_{N_c}-u\|_{H^1_\#} \right) \|w\|_{H^r_\#} \\
 & \le & C \, N_c^{-(m+r+1/2-\epsilon)} \|w\|_{H^{r}_\#} .
\end{eqnarray*}

Therefore
\begin{equation} \label{eq:Hm1boundFourier}
\|u_{N_c}-u\|_{H^{-r}_\#} = \sup_{w \in H^{r}_\#( \Gamma) \setminus
  \left\{0\right\}} \frac{\dps \int_ \Gamma w (u_{N_c}-u)}{\|w\|_{H^{r}_\#}}
\le C \, N_c^{-(m+r+1/2-\epsilon)} ,
\end{equation}
for some constant $C \in \R_+$ independent of ${N_c}$.
Using (\ref{eq:pre_estim_lambda}), (\ref{eq:estim_H1}) and (\ref{eq:Hm1boundFourier}), we end up with 
$$
|\lambda_N-\lambda| \le  C N_c^{-(2m-1-\epsilon)} .
$$

\medskip

Let us now turn to the pseudospectral approximation (\ref{eq:minTFWuN})
of (\ref{eq:minTFWu}). First, we notice that
\begin{eqnarray*}
\frac{C_{\rm W}}2 \|\nabla u_{N_c,N_g}\|_{L^2_\#}^2 - \|V^{\rm
  ion}\|_{L^\infty} {\mathcal N} &\le & E^{\rm TFW}_{N_g}(u_{N_c,N_g})
\\ & \le & E^{\rm TFW}_{N_g}({\mathcal N}^{1/2}|\Gamma|^{-1/2}) \\
& \le &C_{\rm TF} {\mathcal N}^{5/3} |\Gamma|^{-2/3} + \|V^{\rm
  ion}\|_{L^\infty} {\mathcal N},
\end{eqnarray*}
from which we infer that $u_{N,N_g}$ is uniformly bounded in
$H^1_\#(\Gamma)$. We then see that
\begin{eqnarray*}
\lambda_{N_c,N_g} &=& \cN^{-1} \bigg[
\frac{C_{\rm W}}2 \int_\Gamma |\nabla u_{N_c,N_g}|^2 + 
\int_\Gamma \cI_{N_g}( V^{\rm
  ion}|u_{N_c,N_g}|^2+f(|u_{N_c,N_g}|^2)|u_{N_c,N_g}|^2)  \\ && \qquad\qquad
+D_\Gamma(|u_{N_c,N_g}|^2,|u_{N_c,N_g}|^2) \bigg].
\end{eqnarray*}
Using (\ref{eq:bound_INg1}), (\ref{eq:bound_INg2}) and
(\ref{eq:estim_D1}), we obtain that $\lambda_{N,N_c}$ 
also is uniformly bounded. Now,
\begin{eqnarray}
\Delta u_{N_c,N_g} & = & 2 C_{\rm W}^{-1} \Pi_{N_c}
\left(\cI_{N_g}\left(f(|u_{N_c,N_g}|^2)u_{N_c,N_g}\right) \right) + 
2 C_{\rm W}^{-1} \Pi_{N_c} \left( \cI_{N_g}\left(
  V^{\rm ion}u_{N_c,N_g} \right) \right) 
\nonumber \\ && + 2 C_{\rm W}^{-1}
\Pi_{N_c} \left(  V^{\rm Coulomb}_{|u_{N_c,N_g}|^2}  u_{N_c,N_g} \right) - 2 C_{\rm W}^{-1} 
\lambda_{N_c,N_g} u_{N_c,N_g}, \label{eq:EDP_uNcNg}
\end{eqnarray}
and we deduce from (\ref{eq:exact_integration}), (\ref{eq:bound_INg1})
and (\ref{eq:ineg_INg_4}) that 
\begin{eqnarray*}
\left\| \Pi_{N_c} \left( \cI_{N_g}\left( f(|u_{N_c,N_g}|^2)u_{N_c,N_g}
    \right) \right) \right\|_{L^2_\#}
& \le & \left( \int_\Gamma
  \left(\cI_{N_g}(f(|u_{N_c,N_g}|^2))\right)^2 |u_{N_c,N_g}|^2 \right)^{1/2} \\
& = & \left( \sum_{x \in \cG_{N_g} \cap \Gamma}
 \left( \frac{L}{N_g} \right)^3
  f(|u_{N_c,N_g}(x)|^2)^2|u_{N_c,N_g}(x)|^2\right)^{1/2} \\
& \le & \frac 53 C_{\rm TF} \|u_{N_c,N_g}\|_{L^\infty}^{1/3} 
   \left( \sum_{x \in \cG_{N_g}  \cap \Gamma}
\left( \frac{L}{N_g} \right)^3 |u_{N_c,N_g}(x)|^4 \right)^{1/2} \\
& = & \frac 53 C_{\rm TF} \|u_{N_c,N_g}\|_{L^\infty}^{1/3}
\|u_{N_c,N_g}\|_{L^4_\#}^2, 
\end{eqnarray*}
and that
\begin{eqnarray*}
\|\Pi_{N_c} \left( \cI_{N_g}\left(
  V^{\rm ion}u_{N_c,N_g} \right) \right) \|_{L^2_\#} & \le & 
\|\Pi_{2N_c} \left( \cI_{N_g}\left(
  V^{\rm ion}u_{N_c,N_g} \right) \right) \|_{L^2_\#} \\
& \le & \left( \int_\Gamma \cI_{N_g}(|V^{\rm ion}|^2|u_{N_c,N_g}|^2)
\right)^{1/2} \\
& \le & \|V^{\rm ion}\|_{L^\infty} \cN^{1/2}.
\end{eqnarray*}
Besides, using (\ref{eq:estim_D4}), 
\begin{eqnarray*}
\|\Pi_{N_c} \left( V^{\rm Coulomb}_{|u_{N_c,N_g}|^2} u_{N_c,N_g} \right)\|_{L^2_\#} & \le &
\|V^{\rm Coulomb}_{|u_{N_c,N_g}|^2} u_{N_c,N_g} \|_{L^2_\#} \\
 & \le & \cN^{1/2} \|V^{\rm Coulomb}_{|u_{N_c,N_g}|^2} \|_{L^\infty} \\
& \le & \cN^{1/2} \|u_{N_c,N_g}\|_{L^4_\#}^2.
\end{eqnarray*}
As $u_{N_c,N_g}$ is uniformly bounded in $H^1_\#(\Gamma)$, and therefore
in $L^4_\#(\Gamma)$, we get 
\begin{eqnarray*}
\|u_{N_c,N_g}\|_{H^2_\#} & = & \left( \|u_{N_c,N_g}\|_{L^2_\#}^2 +
  \|\Delta u_{N_c,N_g}\|_{L^2_\#}^2  \right)^{1/2} \\
& \le & C \left( 1 +   \|u_{N_c,N_g}\|_{L^\infty}^{1/3} \right) \\
& \le & C \left( 1 +   \|u_{N_c,N_g}\|_{H^2_\#}^{1/3} \right).
\end{eqnarray*}
Therefore $u_{N_c,N_g}$ is uniformly bounded in $H^2_\#(\Gamma)$, hence
in $L^\infty(\R^3)$.

Returning to (\ref{eq:EDP_uNcNg}) and using
(\ref{eq:ineg_INg_5}) and a bootstrap argument, we conclude that
$u_{N_c,N_g}$ is in fact uniformly bounded in $H^{7/2+\epsilon}_\#(\Gamma)$. 

Next, using (\ref{eq:pre_uniqueness}),
\begin{eqnarray*}
\frac\gamma 2 \|u_{N_c,N_g}-u_{N_c}\|_{H^1}^2 & \le & E^{\rm TFW}(u_{N_c,N_g})-E^{\rm TFW}(u_{N_c}) \\
& = & E^{\rm TFW}_{N_g}(u_{N_c,N_g})-E^{\rm TFW}_{N_g}(u_{N_c}) \\ && 
+ \int_\Gamma ((1-\cI_{N_g})(V))(|u_{N_c,N_g}|^2-|u_{N_c}|^2) \\ && 
+ \int_\Gamma (1-\cI_{N_g})(F(|u_{N_c,N_g}|^2)-F(|u_{N_c}|^2)) \\
& \le & \int_\Gamma ((1-\cI_{N_g})(V))(|u_{N_c,N_g}|^2-|u_{N_c}|^2) \\ && 
+ \int_\Gamma (1-\cI_{N_g})(F(|u_{N_c,N_g}|^2)-F(|u_{N_c}|^2)). 
\end{eqnarray*}
Let $g(t,t')=\frac{F(t'^2)-F(t^2)}{t'-t}$. For $N_c$ large enough,
$u_{N_c}$ is uniformly bounded away from zero; besides,
both $u_{N_c}$ and $u_{N_c,N_g}$ are uniformly bounded in
$H^{7/2+\epsilon}_\#(\Gamma)$. Therefore, $g(u_{N_c},u_{N_c,N_g})$ is uniformly bounded in
$H^{7/2+\epsilon}_\#(\Gamma)$. This implies that the Fourier coefficients of
$g(u_{N_c},u_{N_c,N_g})$ go to zero faster that $|k|^{-7/2}$, which
implies, using (\ref{eq:ineg_INg_2}) and (\ref{eq:ineg_INg_6}), that
\begin{eqnarray}
& & \left| \int_\Gamma (1-\cI_{N_g})(F(|u_{N_c,N_g}|^2)-F(|u_{N_c}|^2))
\right| \nonumber \\
& & \qquad = 
\left|\int_\Gamma
  (1-\cI_{N_g})\left(g(u_{N_c},u_{N_c,N_g})\right) \,
  (u_{N_c,N_g}-u_{N_c}) 
\right| \nonumber \\
& & \qquad \le \left\|\Pi_{N_c} \left(
    (1-\cI_{N_g})\left(g(u_{N_c},u_{N_c,N_g})\right) \right)
\right\|_{L^2_\#} \|u_{N_c,N_g}-u_{N_c}\|_{L^2_\#}
 \nonumber \\
& & \qquad \le C N_c^{3/2} N_g^{-7/2} \|u_{N_c,N_g}-u_{N_c}\|_{L^2_\#}.
\label{eq:bound_CC1}
\end{eqnarray}
On the other hand,
\begin{eqnarray*}
&& \left| \int_\Gamma ((1-\cI_{N_g})(V))(|u_{N_c,N_g}|^2-|u_{N_c}|^2)
\right| \\ && \qquad \le 
\|\Pi_{2N_c} ((1-\cI_{N_g})(V))\|_{L^2_\#}
\|u_{N_c,N_g}+u_{N_c}\|_{L^\infty} \|u_{N_c,N_g}-u_{N_c}\|_{L^2_\#} \\
&& \qquad \le C N_c^{3/2} N_g^{-m} \|u_{N_c,N_g}-u_{N_c}\|_{L^2_\#}.
\end{eqnarray*}
Therefore, 
\begin{eqnarray}
 \|u_{N_c,N_g}-u_{N_c}\|_{H^1_\#} & \le & C N_c^{3/2} N_g^{-7/2}.
\label{eq:bound_CC2}
\end{eqnarray}
We then deduce from (\ref{eq:bound_CC2}) and the inverse inequality
(\ref{eq:inv_ineq}) that $(u_{N_c,N_g})_{N_c,N_g \ge 4N_c+1}$ converges to $u$ in
$H^2_\#(\Gamma)$, and therefore in $L^\infty(\R^3)$. It follows that for
$N_c$ large enough, $u_{N_c,N_g}$ is bounded away from zero, which,
together with (\ref{eq:EDP_uNcNg}), implies that $(u_{N_c,N_g})_{N_c,N_g
  \ge 4N_c+1}$  is bounded in $H^{m+1/2-\epsilon}_\#(\Gamma)$. The
estimates (\ref{eq:bound_CC1}) and (\ref{eq:bound_CC2}) can therefore be
improved, yielding 
$$
\left| \int_\Gamma (1-\cI_{N_g})(F(|u_{N_c,N_g}|^2)-F(|u_{N_c}|^2))
\right|  \le C N_c^{3/2} N_g^{-(m+1/2-\epsilon)} \|u_{N_c,N_g}-u_{N_c}\|_{L^2_\#}.
$$
and
$$
 \|u_{N_c,N_g}-u_{N_c}\|_{H^1_\#}  \le  C N_c^{3/2} N_g^{-m}.
$$
We deduce (\ref{eq:estim_NcNg_u}) from the inverse inequality
(\ref{eq:inv_ineq}). For $N_c$ large enough, $u_{N_c,N_g}$ is bounded
away from zero, so that $f(|u_{N_c,N_g}|^2)$ is uniformly
bounded in $H^{m+1/2-\epsilon}_\#(\Gamma)$. Therefore, the $k^{\rm th}$ Fourier
coefficient of $(V^{\rm ion}+f(|u_{N_c,N_g}|^2))$ is bounded by 
$C|k|^{-m}$ where the constant $C$ does not depend on $N_c$ and $N_g$. 
Using the equality
\begin{eqnarray*}
\lambda_{N_c,N_g}-\lambda_{N_c} & = & \cN^{-1} \bigg[ \langle
(H_{|u_{N_c}|^2}-\lambda_{N_c})(u_{N_c,N_g}-u_{N_c}),(u_{N_c,N_g}-u_{N_c})
\rangle_{H^{-1}_\#,H^1_\#} \\
&& \qquad - \int_\Gamma (1-\cI_{N_g})(V^{\rm ion}+
f(|u_{N_c,N_g}|^2))|u_{N_c,N_g}|^2 \\
&& \qquad + D_\Gamma(|u_{N_c,N_g}|^2,|u_{N_c,N_g}|^2-|u_{N_c}|^2) 
+ \int_\Gamma (f(|u_{N_c,N_g}|^2)-f(|u_{N_c}|^2)) |u_{N_c,N_g}|^2 \bigg],
\end{eqnarray*}
(\ref{eq:estim_NcNg_u}) and (\ref{eq:estim_D3}),
 we obtain  (\ref{eq:estim_NcNg_lambda}). A similar calculation leads to 
(\ref{eq:estim_NcNg_I}).

Lastly, we have for all $v_{N_c} \in V_{N_c}$, 
\begin{eqnarray}
&& \!\!\!\!\!\!\!\!\!\!\!\!\!\!\!\!
E^{\rm TFW}_{N_g}(v_{N_c})- E^{\rm TFW}_{N_g}(u_{N_c,N_g}) \\
& = & \langle
(\widetilde
H_{u_{N_c,N_g}}-\lambda_{N_c,N_g})(v_{N_c}-u_{N_c,N_g}),(v_{N_c}-u_{N_c,N_g})\rangle_{H^{-1}_\#,H^1_\#}
\nonumber 
\\
&& + \frac 12 D_\Gamma(|v_{N_c}|^2-|u_{N_c,N_g}|^2,|v_{N_c}|^2-|u_{N_c,N_g}|^2)\nonumber \\
&& + \sum_{x \in \cG_{N_g} \cap \Gamma}
\left( \frac{L}{N_g} \right)^3 \left(
F(|v_{N_c}(x)|^2)-F(|u_{N_c}(x)|^2)-f(|u_{N_c}(x)|^2)(|v_{N_c}(x)|^2-|u_{N_c}(x)|^2) \right)
\nonumber \\
& \ge &   \langle
(\widetilde
H_{u_{N_c,N_g}}-\lambda_{N_c,N_g})(v_{N_c}-u_{N_c,N_g}),(v_{N_c}-u_{N_c,N_g})\rangle_{H^{-1}_\#,H^1_\#}.
\end{eqnarray}
As $u_{N_c,N_g}$ converges to $u$ in $H^2_\#(\Gamma)$, the operator $\widetilde
H_{|u_{N_c,N_g}|^2}^{N_g}-H_{\rho^0}$ converges to zero in operator
norm. Reasoning as in the proof of the uniqueness of $u_{N_c}$, we
obtain that for $N_c$ large enough and $N_g \ge 4N_c+1$, we have for all
$v_{N_c} \in V_{N_c}$ such that $\|v_{N_c}\|_{L^2_\#}=\cN^{1/2}$ and 
$(v_{N_c},u_{N_c})_{L^2_\#} \ge 0$,
$$
 \langle
(\widetilde
H_{u_{N_c,N_g}}-\lambda_{N_c,N_g})(v_{N_c}-u_{N_c,N_g}),(v_{N_c}-u_{N_c,N_g})\rangle_{H^{-1}_\#,H^1_\#}
\ge \frac \gamma 2 \|v_{N_c}-u_{N_c,N_g}\|_{H^1_\#}^2.
$$
Thus the uniqueness of $u_{N_c,N_g}$ for $N_c$ large enough.
\end{proof}

\section*{Acknowledgements} This work was done while E.C.
was visiting the Division of Applied Mathematics of Brown
University, whose support is gratefully acknowledged. 
This work was also partially supported by the ANR grant LN3M.

\end{document}